\newtheorem{theorem}{{\bf Theorem}}[section]
\newtheorem{result}{{\bf Result}}[section]
\newtheorem{claim}{{\bf Claim}}[section]
\newtheorem{example}[theorem]{{\bf Example}}
\newtheorem{lemma}[theorem]{{\bf Lemma}}
\newtheorem{proposition}[theorem]{{\bf Proposition}}
\newtheorem{question}[theorem]{{\bf Question}}
\newcommand{\Kc}{\mathcal{K}}
\newcommand{\Hc}{\mathcal{H}}
\newcommand{\ZZ}{ \ensuremath{\mathbb{Z}}}
\begin{document}

\title{Semi\mbox{-}equivelar toroidal maps and their vertex covers}

	\author[1] {Arnab Kundu}
 	\author[2] { Dipendu Maity}
 	\affil[1, 2]{Department of Science and Mathematics,
 		Indian Institute of Information Technology Guwahati, Bongora, Assam-781\,015, India.\linebreak
 		\{arnab.kundu, dipendu\}@iiitg.ac.in/\{kunduarnab1998, dipendumaity\}@gmail.com.}

\date{\today}

\maketitle

\begin{abstract}

If the face\mbox{-}cycles at all the vertices in a map are of same type then the map is called semi\mbox{-}equivelar. A map is called minimal if the number of vertices is minimal. We know the bounds of number of vertex orbits of semi-equivelar toroidal maps. These bounds are sharp. Datta \cite{BD2020} has proved that every semi-equivelar toroidal map has a vertex-transitive cover. In this article, we prove that if a semi-equivelar map is $k$ orbital then it has a finite index $m$-orbital minimal cover for $m \le k$. We also show the existence and classification of $n$-sheeted covers of semi-equivelar toroidal maps for each $n \in \mathbb{N}$. 
\end{abstract}

\noindent {\small {\em MSC 2010\,:} 52C20, 52B70, 51M20, 57M60.

\noindent {\em Keywords: Semi-equivelar toroidal maps; $m$\mbox{-}orbital covering maps; Classification of covering maps} }

\section{Introduction}

A {\em map M} is an embedding of a graph {\em G} on a surface {\em S} such that the closure of components of $S \setminus G$, called the {\em faces} of $M$, are homeomorphic to $2$-discs. A map $M$ is said to be a {\em polyhedral map} if the intersection of any two distinct faces is either empty, a common vertex, or a common edge. Here map means a polyhedral map.

The $face\mbox{-}cycle$ $C_u$ of a vertex $u$ (also called the {\em vertex-figure} at $u$) in a map is the ordered sequence of faces incident to $u$.
So, $C_u$ is of the form $(F_{1,1}\mbox{-}\cdots \mbox{-}F_{1,n_1})\mbox{-}\cdots\mbox{-}(F_{k,1}\mbox{-}$ $\cdots \mbox{-}F_{k,n_k})\mbox{-}F_{1,1}$, where $F_{i,\ell}$ is a regular $p_i$-gon for $1\leq \ell \leq n_i$, $1\leq i \leq k$, $p_r\neq p_{r+1}$ for $1\leq r\leq k-1$ and $p_n\neq p_1$. The types of the faces in $C_u$ defines the type of $C_u$. In this case, the type of face-cycle($u$) is $[p_1^{n_1}, \dots, p_k^{n_k}]$, is called vertex type of $u$. A map $M$ is called {\em semi-equivelar} (\cite{DM2018}, we are including the same definition for the sake of completeness) if $C_u$ and $C_v$ are of same type for all $u, v \in V(X)$. More precisely, there exist integers $p_1, \dots, p_k\geq 3$ and $n_1, \dots, n_k\geq 1$, $p_i\neq p_{i+1}$ (addition in the suffix is modulo $k$) such that $C_u$ is of the form as above for all $u\in V(X)$. In such a case, $X$ is called a semi-equivelar map of type (or vertex type) $[p_1^{n_1}, \dots, p_k^{n_k}]$ (or, a map of type $[p_1^{n_1}, \dots, p_k^{n_k}]$). 

Two maps of fixed type on the torus are {\em isomorphic} if there exists a {\em homeomorphism} of the torus which maps vertices to vertices, edges to edges, faces to faces and preserves incidents. More precisely,
if we consider two polyhedral complexes $M_{1}$ and $M_{2}$ then an isomorphism to be a map $f ~:~ M_{1}\rightarrow M_{2}$ such that $f|_{V(M_{1})} : V(M_{1}) \rightarrow V(M_{2})$ is a bijection and $f(\sigma)$ is a cell in $M_{2}$ if and only if $\sigma$ is a cell in $M_{1}$. In particular, if $M_1 = M_2$, then $f$ is called an $automorphism$. The \emph{automorphism group $Aut(M)$} of $M$ is the group consisting of automorphisms of $M$. A toroidal map $M$ is called {\em $k$-orbital} if it has $k$ number of vertex orbits under the action of automorphism group Aut($M$) on the set of vertices of $M$.

When working with discrete symmetric structures on a torus, many of the ideas follow the concepts introduced by Coxeter and Moser in \cite{CM1957}.  A surjective mapping $\eta \colon X \to Y$ from a map $X$ to a map $Y$ is called a $covering$ if it preserves adjacency and sends vertices, edges, faces of $X$ to vertices, edges, faces of $Y$ respectively. (In particular, $\eta$ is called $n$-sheeted coveing map if the cardinality of $\eta^{-1}(v)$ is $n$ for every $v \in V(Y)$.) That is, let $G \leq$Aut($X$) be a discrete group acting on a map $X$ \emph{properly discontinuously} (\cite[Chapter 2]{katok:1992}). This means that each element $g$ of $G$ is associated to a automorphism $h_g$ of $X$ onto itself, in such a way that $h_{gh}$ is always equal to $h_g h_h$ for any two elements $g$ and $h$ of $G$, and $G$-orbit of any vertex $u\in V(X)$ is locally finite. Then, there exists $\Gamma \leq $Aut($X$) such that $Y = X/\Gamma$. In such a case, $X$ is called a cover of $Y$. A map $X$ is called regular if the automorphism group of $X$ acts transitively on the set of flags of $X$. Clearly, if a semi-equivelar map is not equivelar then it cannot be regular. A semi-equivelar map $X$ on the torus is called almost regular if it has the same number of flag orbits under the action of its automorphism group as the Archimedean tiling $X$ on the plane of the same type has under the action of its symmetry group. In \cite{drach:2019}, the authors consider vertex-transitive maps on the torus and proved that each vertex-transitive map on the torus has a minimal almost regular cover (on the torus). They also showed that such a minimal almost regular cover is unique and can be constructed explicitly. There is also much interest in finding minimal regular covers of different families of
maps and polytopes (see \cite{HW2012, MPW2013, pw2011}). 

\smallskip

A natural question then is: 

\begin{question}\label{ques}
Let $X$ be a semi-equivelar  map on the torus. Does there exist any map $Y\neq X$ which is a cover of $X$ and has less number of orbit under the action of automorphism group? Does this cover exist for every sheet, if so, how many? 
\end{question}

In this context, we know the following on the torus.

\begin{proposition} (\cite{BD2020}) \label{datta2020}
If $X$ is a semi\mbox{-}equivelar toroidal map then there exists a covering $\eta \colon Y \to X$ where $Y$ is a vertex\mbox{-}transitive toroidal map.
\end{proposition}

Here we prove the following. 

\begin{theorem}\label{thm-main0}
 There does not exist $4$ and $5$-orbital map of type $[4^1,6^1,12^1]$ and $2$-orbital map of type $[3^4,6^1]$ on the torus.
\end{theorem}

\begin{theorem}\label{thm-main1}
{\rm (a)} If $X_2$ is a $m_2$-orbital toroidal map of type $[3^1,6^1,3^1,6^1]$, $[3^1,4^1,6^1,$ $4^1]$ or $[3^1, 12^2]$, then there exists a covering $\eta_{k_2} \colon Y_{k_2} \to X_2$ where $Y_{k_2}$ is $k_2$-orbital for each $k_2 \le m_2$.
{\rm (b)} If $X_3$ is a $m_3$-orbital toroidal map of type $[4^1,6^1,12^1]$, then there exists a covering $\eta_{k_3} \colon Y_{k_3} \to X_3$ where $Y_{k_3}$ is $k_3$-orbital for each $k_3 (\neq 4,5) \le m_3$ except $(k_3, m_3) =  (2,3)$.
\end{theorem}

\begin{theorem}\label{thm-main2}
Let $X$ be a semi\mbox{-}equivelar toroidal map. Then, there exists a $n$-sheeted covering $\eta \colon Y \to X$ for each $n \in \mathbb{N}$.
\end{theorem}

\begin{theorem}\label{thm-main3}
Let $X$ be a $n$-sheeted semi\mbox{-}equivelar toroidal map. Then, there exists different $n$-sheeted  coverings $\eta_{\ell} \colon Y_{\ell} \to X$ for $ \ell \in \{1, 2, \dots, \Lambda(n)\}$, i.e., $Y_{1},$ $Y_{2},$ $\dots,$
$Y_{\Lambda(n)}$ are $n$-sheeted covers of $X$ and different up to isomorphism where
$\Lambda(n) = \frac{1}{4}[\sigma(n)+f_2(n)+g(n)+f_3(n)]$ for type $[3^2,4^1,3^1,4^1]$; 
$\Lambda(n) = \frac{1}{6}(\sigma(n)+2f_1(n)+3f_3(n))$ for types $[3^1,6^1,3^1,6^1]$, $[3^1,12^2]$, $[3^1,4^1,6^1,4^1]$, $[4^1,6^1,12^1]$;
$\Lambda(n)= \frac{\sigma(n)+2f_1(n)}{3}$ for type $[3^4,6^1]$ and 
$\Lambda(n) = \frac{1}{4}[\sigma(n)+2f_2(n)+4f_3(n)-3f_6(n)]$ for type $[4^1,8^2]$,
      where for $n \in \mathbb{N}$,
      
      \begin{itemize}
      \item[1.] $\sigma (n):= \sum_{d\mid n}d$,
      
     \item[2.]$f_1(n) := \left\{
	\begin{array}{ll}
		0  & \mbox{, if } m_j\equiv1\pmod2 \mbox{ for some } j\in\{0,1,2,\dots,n_2\} \\
		\prod_{i=1}^{n_1}(k_i+1) & \mbox{, otherwise,}
	\end{array}
      \right. $
      
      where $n=2^{m_0} \cdot 3^{k_0} \cdot \prod_{i=1}^{n_1}p_i^{k_i} \cdot \prod_{j=1}^{n_2}q_j^{m_j}$ with $p_i$ and $q_j$ are primes such that $p_i\equiv1\pmod3$ for $i\in \{0,1,\dots,n_1\}$ and $q_j\equiv2\pmod3$ for $j\in \{0,1,\dots,n_2\}$, 
      
    \item[3.] $ f_2(n):=\left\{
	\begin{array}{ll}
		0  & \mbox{, if } m_j\equiv1\pmod2 \mbox{ for some } j\in\{0,1,2,\dots,n_2\} \\
		\prod_{i=1}^{n_1}(k_i+1) & \mbox{, otherwise.}
	\end{array}
        \right. $
        
        where $n=2^{m_0} \cdot 3^{k_0} \cdot \prod_{i=1}^{n_1}p_i^{k_i} \cdot \prod_{j=1}^{n_2}q_j^{m_j}$ such that $p_i$ and $q_j$ are primes with $p_i\equiv1\pmod4$ for $i\in \{0,1,\dots,n_1\}$ and $q_j\equiv3\pmod4$ for $j\in \{0,1,\dots,n_2\}$,

     \item[4.] $f_3(n):=\left\{
	 \begin{array}{ll}
		\prod_{i=1}^{n_1}(k_i+1)  & \mbox{, if } k_0=0 \\
		(2k_0-1)\prod_{i=1}^{n_1}(k_i+1) & \mbox{, otherwise.}
	 \end{array}
     \right.$
     
     where $n=2^{k_0}\cdot \prod_{i=1}^{n_1}p_i^{k_i}$ such that $p_i$ is any odd prime for $i\in \{0,1,\dots,n_1\}$,

    \item[6.]$f_6(n):=\left\{
	\begin{array}{ll}
		0  & \mbox{, if } k_i\equiv1\pmod2 \mbox{ for some } i\in\{0,1,2,\dots,n_1\}\\
		1 & \mbox{, otherwise.}
	\end{array}
    \right.$
    
    where 
    $n=2^{m}\cdot 3^{k_0} \cdot \prod_{i=1}^{n_1}p_i^{k_i}$ such that $p_i$ is any prime other than $2$ for $i\in \{0,1,\dots,n_1\}$.
    
    \item[7.] $g(n) := \sum_{\substack{d \mid n \\2 \mid d}}2 + \sum_{\substack{d \mid n \\2 \nmid d}}1.$
    
    
\end{itemize}
\end{theorem}

\begin{theorem}\label{thm-main4}
Let $X$ be a $m$-orbital semi\mbox{-}equivelar toroidal map and $Y$ be a $k$-orbital covers of $X$. Then, there exists a $k$-orbital covering map $\eta \colon Z \to X$ such that $Z$ is minimal.
\end{theorem}
 
\section{Definitions, results, and ideas of the proofs of the theorems}

Throughout the last few decades there have been many results about maps and semi-equivelar maps that are highly symmetric. In particular, there has been
recent interest in the study of discrete objects using combinatorial, geometric, and algebraic approaches, with the topic of symmetries of maps receiving a lot of interest. There is a great history of work surrounding maps on the Euclidean plane $\mathbb{R}^2$ and  the $2$-dimensional torus.

An {\em Archimedean} tiling of the plane $\mathbb{R}^2$ is a tiling of $\mathbb{R}^2$ by regular polygons such that all the vertices of the tiling are of same type.
Gr\"{u}nbaum and Shephard \cite{GS1977} showed that there are exactly eleven types of Archimedean tilings on the plane (see Example \ref{exam:plane}). These types are $[3^6]$, $[4^4]$, $[6^3]$, $[3^4,6^1]$, $[3^3,4^2]$,  $[3^2,4^1,3^1,4^1]$, $[3^1,6^1,3^1,6^1]$, $[3^1,4^1,6^1,4^1]$, $[3^1,12^2]$,  $[4^1,6^1,12^1]$, $[4^1,8^2]$.
Clearly, Archimedean tilings and semi-equivelar maps on $\mathbb{R}^2$ are same. 
We know from \cite{DU2005, DM2017, DM2018, GS1977}  that the Archimedean tilings $E_i$ $(1 \le i \le 11)$ (in Section \ref{sec:examples}) are unique and vertex-transitive. Thus, we have the following result. 



\begin{proposition} \label{propo-1}
All semi-equivelar maps on the torus are the quotient of an Archimedean tiling on the plane by a discrete subgroup of the automorphism group of the tiling.
\end{proposition}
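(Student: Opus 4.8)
The plan is to lift $X$ to the universal cover of the torus and then apply Proposition~\ref{theo:plane}. So let $X$ be a semi-equivelar map of type $[p_1^{n_1},\dots,p_k^{n_k}]$ on the torus $\mathbb{T}^2$. First I would pin down the possible types. Since every vertex has the same face-cycle, each vertex of $X$ lies on $d:=n_1+\cdots+n_k$ edges and on $n_i$ faces coming from the $i$-th block of that cycle (a $p_i$-gon), so counting incidences two ways gives $2|E(X)|=d\,|V(X)|$ and $|F(X)|=\bigl(\sum_i n_i/p_i\bigr)|V(X)|$. Substituting into Euler's formula $|V|-|E|+|F|=\chi(\mathbb{T}^2)=0$ yields
\[
\sum_{i=1}^{k} n_i\Bigl(\tfrac12-\tfrac1{p_i}\Bigr)=1 .
\]
Running through the finitely many integer tuples with $p_i\ge 3$, $n_i\ge 1$ and $p_i\ne p_{i+1}$ (indices mod $k$) that satisfy this, and discarding those which fail an elementary local-consistency check around a face (as in Gr\"{u}nbaum--Shephard \cite{GS1977}), leaves exactly the eleven types of the Archimedean tilings $E_1,\dots,E_{11}$; hence $X$ has the type of some $E_i$.

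Next I would build the lift. Let $\pi\colon\mathbb{R}^2\to\mathbb{T}^2$ be the universal covering. As $\pi$ is a local homeomorphism and each closed face of $X$ is a disk, the cells of $X$ pull back to a cell decomposition $\widetilde X$ of $\mathbb{R}^2$, each of whose cells $\pi$ carries homeomorphically onto a cell of $X$. A short covering-space argument (distinct lifts of a disk are disjoint, and two lifted faces can meet only over the intersection of their images) shows that $\widetilde X$ is again a polyhedral map in the sense of this paper, and that $\pi$ is an isomorphism on the star of each vertex; hence $\widetilde X$ is a semi-equivelar map on the plane of the same type as $X$. By the previous paragraph this type is that of some $E_i$, so Proposition~\ref{theo:plane} gives $\widetilde X\cong E_i$.

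Finally I would recover $X$ as a quotient. The deck-transformation group $\Gamma$ of $\pi$ is isomorphic to $\pi_1(\mathbb{T}^2)\cong\mathbb{Z}^2$; it acts on $\mathbb{R}^2$ freely and properly discontinuously, commutes with $\pi$, and permutes the cells of $\widetilde X$, so under the isomorphism $\widetilde X\cong E_i$ it becomes a discrete subgroup of $\mathrm{Aut}(E_i)$. Since $\pi$ presents $X$ as the orbit space of this action, $X\cong\widetilde X/\Gamma\cong E_i/\Gamma$, which is exactly what is claimed.

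I expect the genuine obstacle to be the enumeration in the first step: one has to rule out every non-Archimedean solution of $\sum_i n_i(\tfrac12-\tfrac1{p_i})=1$ — for instance $[3,7,42]$, which satisfies the equation but admits no consistent face-cycle — and it is here that the polyhedrality hypothesis and the cyclic order of faces around a vertex are used in an essential way. Everything else is formal covering-space theory, though verifying that $\widetilde X$ is a polyhedral map, and not merely a CW complex, does take a little care.
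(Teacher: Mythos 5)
Your argument is correct and follows essentially the same route the paper intends: Proposition \ref{propo-1} is presented there as a consequence of Proposition \ref{theo:plane} together with the classification of admissible vertex types on the torus from \cite{DU2005, DM2017, DM2018}, and your lift-to-the-universal-cover, apply-uniqueness, quotient-by-the-deck-group scheme is exactly that derivation. The one step you compress --- ruling out the non-Archimedean solutions of $\sum_i n_i\bigl(\tfrac12-\tfrac1{p_i}\bigr)=1$ such as $[3^1,7^1,42^1]$ as vertex types of toroidal maps --- is precisely the content the paper outsources to \cite{DM2018}, so your identification of it as the genuine obstacle is apt.
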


A map $M$ is said to be {\em vertex-transitive} if the automorphism group ${\rm Aut}(M)$ acts transitively on the set $V(M)$ of vertices of $M$. Clearly, vertex-transitive maps are semi-equivelar. In this context, Altshuler \cite{A1973} has shown construction and enumeration of maps of types $[3^{6}]$ and $[6^{3}]$ on the torus. Kurth \cite{kurth:1986} has given an enumeration of semi-equivelar maps of types $[3^{6}]$, $[4^{4}]$, $[6^{3}]$ on the torus. Negami \cite{negami:1983} has studied the uniqueness and faithfulness of embedding a class of toroidal graphs. 
Coxeter and Moser \cite[Chapter 8]{CM1957} have discussed the techniques to classify regular (rotary) maps on the torus. Then, Brehm and K\"{u}hnel \cite{brehm:2008} and Hubard et al. \cite{hubard:2012} have extended the techniques to classify symmetric toridal maps. In particular, Hubard et al. \cite[Lemma 6]{hubard:2012} have extended almost directly to Archimedean maps (see \cite{drach:2019}) and a classification of vertex-transitive semi-equivelar maps can be easily derived using
those techniques. Maity and Upadhyay \cite{MU2018} devised a way of enumerating all semi-equivelar maps of types $[3^6],$ $[4^4],$ $[6^3],$ $[3^3,4^2],$ $[3^2,4^1,3^1,4^1],$ $[3^4,6^1],$ $[4^1,6^1,12^1],$ $[3^1,6^1,3^1,6^1],$ $[3^1,4^1,6^1,4^1]$,  $[3^1,12^2]$ and $[4^1,8^2]$ on the torus. Datta and Upadhyay \cite{DU2005} have proved that maps of types $[3^6]$, $[6^3]$ and $[4^4]$ are vertex-transitive. Datta and Maity \cite{DM2017} have extended this result and have proved that the maps of types $[3^3, 4^2]$ are vertex-transitive on the torus and there are maps in the remaining seven types which are not vertex-transitive. In these seven cases, Pellicer  and  Weiss \cite{pel2011}, and \v{S}uch \cite{su:vt11} have shown the existence of a vertex-transitive map for each type on a given number of vertices.  We also know from \cite{DM2017}, \cite{DM2018} the following. 

\begin{proposition} \label{prop:no-of-orbits}
Let $X$ be a semi-equivelar map on the torus. Let the vertices of $X$ form $m$ ${\rm Aut}(X)$-orbits. 
{\rm (a)} If the type of $X$ is  $[3^2,4^1,3^1,4^1]$ or $[4^1,8^2]$ then $m\leq 2$.
{\rm (b)} If the type of $X$ is  $[3^1,6^1,3^1,6^1]$, $[3^4,6^1]$, $[3^1,4^1,6^1,4^1]$ or $[3^1,12^2]$ then $m\leq 3$. {\rm (c)} If the type of $X$ is  $[4^1,6^1, 12^1]$ then $m\leq 6$. In (a), (b) $\&$ (c), the bounds are sharp. 
\end{proposition}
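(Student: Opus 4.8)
The plan is to make the quotient structure of $X$ completely explicit. By Propositions~\ref{theo:plane} and~\ref{propo-1}, $X=\widetilde X/\Gamma$, where $\widetilde X$ is the Archimedean tiling of the given type (unique up to isomorphism) and $\Gamma$ is a discrete subgroup of the wallpaper group $G:={\rm Aut}(\widetilde X)$ with $\widetilde X/\Gamma$ a torus; since the torus is closed and orientable, $\Gamma$ is a rank-$2$ lattice of translations of finite index in the translation subgroup $T=T(G)$. Passing to the universal cover of the torus (the plane, carrying $\widetilde X$) and using that a combinatorial automorphism of an Archimedean tiling is realized by an isometry, every automorphism of $X$ lifts to an element of $G$ normalizing $\Gamma$, and conversely every element of $N_G(\Gamma)$ descends. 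Hence ${\rm Aut}(X)\cong N/\Gamma$ with $N:=N_G(\Gamma)$, and the number $m$ of ${\rm Aut}(X)$-orbits on $V(X)=V(\widetilde X)/\Gamma$ equals the number of orbits of $N$ acting on $V(\widetilde X)$.

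The key observation is that a large part of $N$ is present regardless of $\Gamma$: $T$ is abelian so $T\subseteq N$, and every half-turn (rotation by $\pi$) of $G$ conjugates each translation to its inverse, hence normalizes $\Gamma$ and lies in $N$. For each of the eleven Archimedean tilings all half-turns of $G$ lie in a single coset of $T$, so $N_0:=\langle T,\ \text{half-turns of }G\rangle=T\cup\rho_0T$ for one fixed half-turn $\rho_0$, and $N\supseteq N_0$ always; therefore $m$ is at most the number of $N_0$-orbits on $V(\widetilde X)$, which depends only on the type. To compute it I would, for each type, take $F_0$ to be the largest face: every vertex lies on a copy of $F_0$, and reading off a fundamental domain for $T$ and checking which vertices of a fixed $F_0$ are already identified by translations gives the number $c$ of $T$-orbits on vertices, namely $c=1$ for $[3^6],[4^4]$; $c=2$ for $[6^3],[3^3,4^2]$; $c=3$ for $[3^1,6^1,3^1,6^1]$; $c=4$ for $[3^2,4^1,3^1,4^1],[4^1,8^2]$; $c=6$ for $[3^4,6^1],[3^1,4^1,6^1,4^1],[3^1,12^2]$; and $c=12$ for $[4^1,6^1,12^1]$. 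For every type other than $[3^6],[4^4]$ one may take $\rho_0$ to be the half-turn about the centre of $F_0$, and a short computation with the regular polygon $F_0$ shows that $\rho_0$ permutes the $c$ classes either fixing each one (this happens only for $[3^1,6^1,3^1,6^1]$, where antipodal vertices of a hexagon are already $T$-identified) or pairing them up two-by-two. This yields $m\le1$ for $[3^6],[4^4],[6^3],[3^3,4^2]$ (re-proving Proposition~\ref{prop:vtmaps}), $m\le2$ in case (a), $m\le3$ in case (b), and $m\le6$ in case (c).

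For the sharpness claims I would, for each of the seven types in (a)--(c), take $\Gamma$ to be a sufficiently large ``oblique'' finite-index sublattice of $T$ whose stabilizer inside the point group $G/T$ is exactly $\{\pm I\}$ (such sublattices exist because $-I\in G/T$ for all eleven tilings), chosen large enough that two faces of $\widetilde X/\Gamma$ always meet in the empty set, a vertex or an edge. For such $\Gamma$ one has $N=N_0$ exactly, so $m$ equals the number of $N_0$-orbits found above, i.e.\ exactly $2$, $3$ or $6$ according to the type, and the bounds are attained.

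The main obstacle is the computation in the middle paragraph: for each of the seven relevant tilings one has to pin down the full translation lattice $T$, the exact number $c$ of vertex orbits under $T$ (in particular which vertices of $F_0$ are already identified by translations, which is what makes $[3^1,6^1,3^1,6^1]$ behave differently), which coset of $T$ contains the half-turns, and the precise permutation that $\rho_0$ induces on the $c$ orbits; and in the sharpness step one must verify that the chosen lattice $\Gamma$ acquires no extra normalizing symmetry — no reflection or glide, no rotation of order $3$, $4$, or $6$ — while still yielding a polyhedral map. This case-by-case bookkeeping, rather than any single conceptual step, is where the work lies.
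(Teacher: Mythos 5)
Your proposal addresses a statement that this paper does not actually prove: Proposition \ref{prop:no-of-orbits} is imported from \cite{DU2005,DM2017,DM2018}, so there is no in-paper argument to compare against. That said, your route is correct in outline and is essentially the same machinery the paper itself deploys in Section \ref{sec:proofs-1} for its new results: write $X=E_i/\Gamma$ with $\Gamma$ a rank-two lattice of translations (orientability excluding glide reflections), lift automorphisms of $X$ to automorphisms of the plane tiling normalizing $\Gamma$ (this is the paper's ``Lifting Process'', Claim \ref{Five orbit}), and bound orbit numbers by counting orbits of subgroups generated by the translation group together with one point symmetry. Your key observation --- every half-turn conjugates each translation to its inverse, hence normalizes any translation lattice, so $N_G(\Gamma)\supseteq T\cup\rho_0T$ and $m$ is at most the number of $(T\cup\rho_0T)$-orbits --- is exactly the mechanism behind the counts $12$, $6$, $3$, $2$ in the proof of Lemma \ref{lem1} and behind the groups $\mathcal{G}_i=\langle\alpha_i,\beta_i,\rho_i\rangle$ of Sections \ref{36}--\ref{4612}. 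Your asserted data are also right: $c=4$ for $[3^2,4^1,3^1,4^1]$ and $[4^1,8^2]$, $c=3$ for $[3^1,6^1,3^1,6^1]$ (where antipodal vertices of a hexagon are indeed translates, so the half-turn preserves each class), $c=6$ for $[3^4,6^1]$, $[3^1,4^1,6^1,4^1]$, $[3^1,12^2]$, and $c=12$ for $[4^1,6^1,12^1]$, with the half-turn pairing classes two-by-two in all the other cases; this gives the stated bounds $2$, $3$ and $6$, and incidentally re-proves Proposition \ref{prop:vtmaps}.

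Two steps deserve more than the ``bookkeeping'' label you give them. First, the identification ${\rm Aut}(X)\cong N_G(\Gamma)/\Gamma$ (and hence ``$m$ equals the number of $N$-orbits on $V(\widetilde X)$'') rests on the fact that combinatorial automorphisms of the Archimedean tilings are realized by isometries together with the lifting argument; you acknowledge this, but it is the load-bearing step, not a side remark. Second, in the sharpness part the parenthetical justification ``such sublattices exist because $-I\in G/T$'' proves nothing about the existence of a finite-index sublattice whose point-group stabilizer is \emph{exactly} $\{\pm I\}$; you must exhibit one for each type (a sufficiently oblique sublattice such as $\langle pA+qB,\, rB\rangle$ with suitable $p,q,r$ works, but one has to verify that no reflection and no rotation of order $3$, $4$ or $6$ preserves it, and that the index is large enough for the quotient to be polyhedral). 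The papers cited for the proposition construct such extremal examples explicitly; with that supplied, your argument closes.
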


Let $X_1 (= E/K_1), X_2 (= E/K_2)$ (by Prop. \ref{propo-1}) be two toroidal maps. $X_1, X_2$ are said to be {\em isomorphic} if there exists an isomorphism between $X_1, X_2$. $X_1, X_2$ are said to be {\em equal} if the orbits of $E$ under the action of $K_1, K_2$ are equal as sets. 
Let $K$ be a discrete, fixed point free, rank two subgroup of isometry group of $\mathbb{R}^2$ generated by two translations by vectors $w_1$ and $w_2$. Then the set $\mathbb{Z}w_1+\mathbb{Z}w_2$ is called {\em lattice} of $K$.



 



We prove Theorems \ref{thm-main0}  - \ref{thm-main4} in Section \ref{sec:proofs-1}. Here, we give brief ideas of the proofs of the Theorems \ref{thm-main1}, \ref{thm-main3}  with examples. The others are similar to these.  

In Theorem \ref{thm-main1}, we discussed the existence of orbital covering maps for a given map. The idea is as follows. Let $X$ be a toroidal map. Then, by Prop. \ref{propo-1}, $X = E/K$ for some discrete subgroup $K$ of Aut($E$). Thus, let $E/K_1$ and $E/K_2$ be two maps on the torus. We know from \cite{GS2016} that $E/{K}_1$ and $E/{K}_2$ are isomorphic if ${K}_1$ and ${K_2}$ are conjugate in Aut($E$). That is, $E/{K}_1$ and $E/{K}_2$ are isomorphic if and only if there exists a symmetry of Aut($E$) sending lattice of ${K}_1$ to lattice of ${K}_2$. A symmetry $\gamma \in $ Aut($E$) induces an automorphism of the map $E/{K}$ if $\gamma$ normalizes ${K}$, that is, if the lattice of ${K}$ remains invariant under $\gamma$. Geometrically it is equivalent to mapping fundamental domains of ${K}$ to  fundamental domains of ${K}$. 
Therefore, Aut($E/{K}$) is the group induces by the normalizer of ${K}$ in Aut(${K}$), that is, 
\begin{equation}\label{nor}
     {\rm Aut}(E/K) = \frac{{\rm Nor}_{Aut(E)}(K)}{K}, \cite{hubard:2012}.
\end{equation}
Clearly, for every subgroup $L$ of $K$, the group $K/L$ acts on $E/L$. Hence, we get covering $E/L \longrightarrow \frac{E/L}{K/L}$. We construct some suitable subgroups $L$ (translation group) and $G$  with $L \trianglelefteq G \le Aut(E)$ by solving some diophantine equations such that $E/L$ has desired number of orbits by the action of $G/L$ and we show that the number of orbits remain unchanged by the action of Aut$(E/L)$. Thus, we show the existence of lesser orbital cover. The details of these proofs are given in the proof of Theorem \ref{thm-main1}.  

In Theorem \ref{thm-main3}, we discussed the classification of covering maps for a given map. Here we explain the idea for maps of type $[3^1,4^1,6^1,4^1]$. Let $T$ be the tiling of $\mathbb{R}^2$ of type $[3^1,4^1,6^1,4^1]$ (see Fig. \ref{fig:1}). In the proof, first we represented toroidal maps with some special types of matrices. Since, $X=E/K$ is torus, so, $K$ is a rank two discrete subgroup of translation group $\mathcal{H}$. Let $A$ and $B$ be points as indicated in Fig. \ref{fig:1}. Then translations by the vectors $\overrightarrow{OA}$ and $\overrightarrow{OB}$ are symmetry of the tiling $T$. 
Let us denote the translations by vectors $\overrightarrow{OA}$ and $\overrightarrow{OB}$ by $A$ and $B$ respectively. Then the translation group, $\mathcal{H}$, is generated by $A$ and $B$. Since $K$ has rank two there exists two linearly independent vectors $w_1$ and $w_2$ in $\mathbb{R}^2$ such that $K=\langle w_1,w_2 \rangle$. The set $\{A,B\}$ in tiling $T$ (in Fig. \ref{fig:1}) forms a basis of the vector space $\mathbb{R}^2$ over $\mathbb{R}$. Therefore there exists integers $p_1,q_1,p_2,q_2$ such that $w_1 = p_1A+q_1B$ and $w_2 = p_2A+q_2B$. Then, we represent $X$ by $\begin{bmatrix}
p_1&p_2\\q_1&q_2
\end{bmatrix}(=M$, say). 
Then, we have considered the hermite normal form of $M$, it will be of the form $\begin{bmatrix}
a&0\\b&d
\end{bmatrix}$ with $a,d>0$ and $0\le b<d$. Using this, we have shown that presence of a particular symmetry in Aut($X$) is equivalent to, $b$ is a solution of a polynomial equation modulo $d/a$ where $d\mid a$. Then by theory of quadratic residues we get some functions, denoted by $f_i(n)$ in Theorem \ref{thm-main3}, which gives number of solutions of these polynomials. $f_i(n)$ is depends on powers of different prime numbers in the factorization of $n$. $f_i(n)$ is number of $n$-sheeted covers having that particular symmetry in its automorphism group. However in this counting some isomorphic maps are counted more than once. To avoid that we have used the concept of stabilizer of group actions and its index. The details are given  in Section \ref{sec:proofs-1} and in the proof of Theorem \ref{thm-main3}. 

\begin{figure}[ht!]\label{T}
\tiny
\tikzstyle{ver}=[]
\tikzstyle{vert}=[circle, draw, fill=black!100, inner sep=0pt, minimum width=4pt]
\tikzstyle{vertex}=[circle, draw, fill=black!00, inner sep=0pt, minimum width=4pt]
\tikzstyle{edge} = [draw,thick,-]
\centering

\begin{tikzpicture}[scale=0.4] 

\draw ({sqrt(3)}, 1) -- (0, 2) -- ({-sqrt(3)}, 1) -- ({-sqrt(3)}, -1) -- (0, -2) -- ({sqrt(3)}, -1) -- ({sqrt(3)}, 1);
\draw ({6+sqrt(3)}, 1) -- (6+0, 2) -- ({6-sqrt(3)}, 1) -- ({6-sqrt(3)}, -1) -- (6+0, -2) -- ({6+sqrt(3)}, -1) -- ({6+sqrt(3)}, 1);
\draw ({12+sqrt(3)}, 1) -- (12+0, 2) -- ({12-sqrt(3)}, 1) -- ({12-sqrt(3)}, -1) -- (12+0, -2) -- ({12+sqrt(3)}, -1) -- ({12+sqrt(3)}, 1);

\draw ({-4.8+sqrt(3)}, 1) -- ({-sqrt(3)}, 1);
\draw ({-4.8+sqrt(3)}, -1) -- ({-sqrt(3)}, -1);
\draw ({sqrt(3)}, 1) -- ({6-sqrt(3)}, 1);
\draw ({sqrt(3)}, -1) -- ({6-sqrt(3)}, -1);
\draw ({6+sqrt(3)}, 1) -- ({12-sqrt(3)}, 1);
\draw ({6+sqrt(3)}, -1) -- ({12-sqrt(3)}, -1);
\draw ({12+sqrt(3)}, 1) -- ({16.8-sqrt(3)}, 1);
\draw ({12+sqrt(3)}, -1) -- ({16.8-sqrt(3)}, -1);

\draw ({-sqrt(3)}, 1) -- (-3+0, 2.6);
\draw (0, 2) -- (-1.25, 3.6);
\draw (0, 2) -- (1.3, 3.6);
\draw ({sqrt(3)}, 1) -- (3.05, 2.6);

\draw ({6-sqrt(3)}, 1) -- (3+0, 2.6);
\draw (6, 2) -- (4.75, 3.6);
\draw (6, 2) -- (7.3, 3.6);
\draw ({6+sqrt(3)}, 1) -- (9.05, 2.6);

\draw ({12-sqrt(3)}, 1) -- (9, 2.6);
\draw (12, 2) -- (10.75, 3.6);
\draw (12, 2) -- (13.3, 3.6);
\draw ({12+sqrt(3)}, 1) -- (15.05, 2.6);


\draw [xshift = 86, yshift = 130] (-6+0, -2) -- ({-6+sqrt(3)}, -1) -- ({-6+sqrt(3)}, 1) -- ({-6+sqrt(3)}, 1) -- (-6+0, 2);
\draw [xshift = 86, yshift = 130] ({sqrt(3)}, 1) -- (0, 2) -- ({-sqrt(3)}, 1) -- ({-sqrt(3)}, -1) -- (0, -2) -- ({sqrt(3)}, -1) -- ({sqrt(3)}, 1);
\draw [xshift = 86, yshift = 130] ({6+sqrt(3)}, 1) -- (6+0, 2) -- ({6-sqrt(3)}, 1) -- ({6-sqrt(3)}, -1) -- (6+0, -2) -- ({6+sqrt(3)}, -1) -- ({6+sqrt(3)}, 1);
\draw [xshift = 86, yshift = 130] (12+0, 2) -- ({12-sqrt(3)}, 1) -- ({12-sqrt(3)}, -1) -- (12+0, -2) ;

\draw [xshift = 86, yshift = 130] ({-6+sqrt(3)}, 1) -- ({-sqrt(3)}, 1);
\draw [xshift = 86, yshift = 130] ({-6+sqrt(3)}, -1) -- ({-sqrt(3)}, -1);
\draw [xshift = 86, yshift = 130] ({sqrt(3)}, 1) -- ({6-sqrt(3)}, 1);
\draw [xshift = 86, yshift = 130] ({sqrt(3)}, -1) -- ({6-sqrt(3)}, -1);
\draw [xshift = 86, yshift = 130] ({6+sqrt(3)}, 1) -- ({12-sqrt(3)}, 1);
\draw [xshift = 86, yshift = 130] ({6+sqrt(3)}, -1) -- ({12-sqrt(3)}, -1);


\draw [yshift = 260] ({sqrt(3)}, 1) -- (0, 2) -- ({-sqrt(3)}, 1) -- ({-sqrt(3)}, -1) -- (0, -2) -- ({sqrt(3)}, -1) -- ({sqrt(3)}, 1);
\draw [yshift = 260] ({6+sqrt(3)}, 1) -- (6+0, 2) -- ({6-sqrt(3)}, 1) -- ({6-sqrt(3)}, -1) -- (6+0, -2) -- ({6+sqrt(3)}, -1) -- ({6+sqrt(3)}, 1);
\draw [yshift = 260]({12+sqrt(3)}, 1) -- (12+0, 2) -- ({12-sqrt(3)}, 1) -- ({12-sqrt(3)}, -1) -- (12+0, -2) -- ({12+sqrt(3)}, -1) -- ({12+sqrt(3)}, 1);

\draw [yshift = 260] ({-4.8+sqrt(3)}, 1) -- ({-sqrt(3)}, 1);
\draw [yshift = 260] ({-4.8+sqrt(3)}, -1) -- ({-sqrt(3)}, -1);
\draw [yshift = 260] ({sqrt(3)}, 1) -- ({6-sqrt(3)}, 1);
\draw [yshift = 260] ({sqrt(3)}, -1) -- ({6-sqrt(3)}, -1);
\draw [yshift = 260] ({6+sqrt(3)}, 1) -- ({12-sqrt(3)}, 1);
\draw [yshift = 260] ({6+sqrt(3)}, -1) -- ({12-sqrt(3)}, -1);
\draw [yshift = 260] ({12+sqrt(3)}, 1) -- ({16.8-sqrt(3)}, 1);
\draw [yshift = 260] ({12+sqrt(3)}, -1) -- ({16.8-sqrt(3)}, -1);

\draw [xshift = 86, yshift = 130] ({-sqrt(3)}, 1) -- (-3+0, 2.6);
\draw [xshift = 86, yshift = 130](0, 2) -- (-1.25, 3.6);
\draw [xshift = 86, yshift = 130](0, 2) -- (1.3, 3.6);
\draw [xshift = 86, yshift = 130]({sqrt(3)}, 1) -- (3.05, 2.6);

\draw [xshift = 86, yshift = 130]({6-sqrt(3)}, 1) -- (3+0, 2.6);
\draw [xshift = 86, yshift = 130](6, 2) -- (4.75, 3.6);
\draw [xshift = 86, yshift = 130](6, 2) -- (7.3, 3.6);
\draw [xshift = 86, yshift = 130]({6+sqrt(3)}, 1) -- (9.05, 2.6);

\draw [xshift = 86, yshift = 130]({12-sqrt(3)}, 1) -- (9, 2.6);
\draw [xshift = 86, yshift = 130](12, 2) -- (10.75, 3.6);
\draw [yshift = 130](-3, 2) -- (-1.7, 3.6);
\draw [yshift = 130]({-3+sqrt(3)}, 1) -- (.05, 2.6);


\draw [xshift = 86, yshift = -130] ({-.8-sqrt(3)}, 2) -- (-3+0, 2.6);
\draw [xshift = 86, yshift = -130](-.8, 3) -- (-1.3, 3.6);
\draw [yshift = -130](-2.2, 3) -- (-1.7, 3.6);
\draw [yshift = -130]({-2.2+sqrt(3)}, 2) -- (.05, 2.6);

\draw [xshift = 256, yshift = -130] ({-.8-sqrt(3)}, 2) -- (-3+0, 2.6);
\draw [xshift = 256, yshift = -130](-.8, 3) -- (-1.3, 3.6);
\draw [xshift = 170,yshift = -130](-2.2, 3) -- (-1.7, 3.6);
\draw [xshift = 170,yshift = -130]({-2.2+sqrt(3)}, 2) -- (.05, 2.6);

\draw [xshift = 426, yshift = -130] ({-.8-sqrt(3)}, 2) -- (-3+0, 2.6);
\draw [xshift = 426, yshift = -130](-.8, 3) -- (-1.3, 3.6);
\draw [xshift = 340,yshift = -130](-2.2, 3) -- (-1.7, 3.6);
\draw [xshift = 340,yshift = -130]({-2.2+sqrt(3)}, 2) -- (.05, 2.6);

\draw [yshift = 260] ({-sqrt(3)}, 1) -- (-2.2+0, 1.6);
\draw [yshift = 260] (0, 2) -- (-0.5, 2.6);
\draw [yshift = 260] (0, 2) -- (.5, 2.6);
\draw [yshift = 260] ({sqrt(3)}, 1) -- (2.2, 1.6);

\draw [xshift = 170, yshift = 260] ({-sqrt(3)}, 1) -- (-2.2+0, 1.6);
\draw [xshift = 170, yshift = 260] (0, 2) -- (-0.5, 2.6);
\draw [xshift = 170, yshift = 260] (0, 2) -- (.5, 2.6);
\draw [xshift = 170, yshift = 260] ({sqrt(3)}, 1) -- (2.2, 1.6);

\draw [xshift = 340, yshift = 260] ({-sqrt(3)}, 1) -- (-2.2+0, 1.6);
\draw [xshift = 340, yshift = 260] (0, 2) -- (-0.5, 2.6);
\draw [xshift = 340, yshift = 260] (0, 2) -- (.5, 2.6);
\draw [xshift = 340, yshift = 260] ({sqrt(3)}, 1) -- (2.2, 1.6);

\draw [thick, dotted] (3,4.5) -- (9,4.5);
\draw [thick, dotted] (3,4.5) -- (6,9);
\draw [thick, dotted] (3,4.5) -- (0,9);



\node[ver] () at (3,4.5){$\bullet$};
\put(25,50.8){\mbox{$O$}}
\node[ver] () at (9,4.5){$\bullet$};
\put(108,50.5){\mbox{$A$}}
\node[ver] () at (6,9){$\bullet$};
\put(75,105){\mbox{$B$}}
\node[ver] () at (0,9){$\bullet$};


\end{tikzpicture}
\caption{\bf $T([3^1, 4^1, 6^1, 4^1])$}\label{fig:1}
\end{figure}
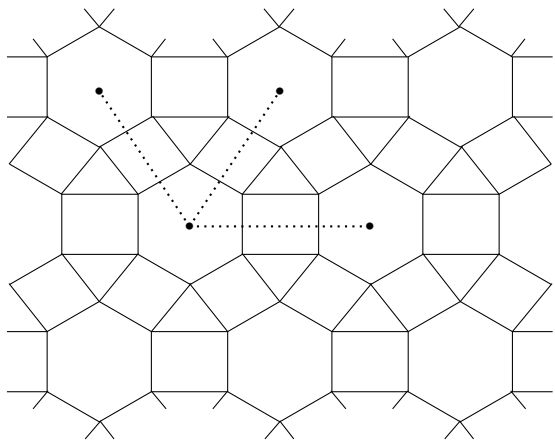

We illustrate Theorems \ref{thm-main1} and \ref{thm-main2} with a map of type $[3^1,4^1,6^1,4^1]$ on the torus.
$M_3$ (in \cite[Figure 2]{DM2018}) is a toroidal map with $3$ vertex orbits. The matrix representation of $M_3$ is $\begin{bmatrix}
2&1\\0&1
\end{bmatrix} (=A)$ and  $\begin{bmatrix}
2&0\\0&1
\end{bmatrix} (=B)$ is its Hermite normal form. 
By the ideas given in the proof of Theorem \ref{thm-main1}, we get a map which is represented by the matrix $C=\begin{bmatrix}
0&4\\2&2
\end{bmatrix}$ from $B$ where the corresponding toroidal map of $C$ is cover of the map corresponding of $B$. The hermite normal form of $C$ is $\begin{bmatrix}
2&0\\2&4
\end{bmatrix}$. From \cite[Lemma 3.1,3.3,3.4 and 3.9]{DKM2022} one can observe that the automorphism group of the map represented by $C$ generated by point reflection, translations and only one line reflection. The $60^{\circ}$ rotation and other two line reflections are not symmetry of the map. From this we can conclude that the map represented by $C$ is 2-orbital.
This $C$ is a $4$-sheeted cover of $M_3$ as det($C$)/det($A$)$=4$. Now, $f_1(4) = 1$ and $\sigma(4) = 1+2+4=7$ (see in the proof of Theorem \ref{thm-main2}). Therefore, by ideas given in the proof of Theorem \ref{thm-main2}, total number of $4$-sheeted covers up to isomorphism is $(7+2\cdot1)/3=3$. 

\section{Examples: Archimedean tilings} \label{sec:examples}

We first present eleven Archimedean tilings on the plane. We need these examples for the proofs of our results in Section \ref{sec:proofs-1}.

\begin{example} \label{exam:plane}
{\rm Eleven Archimedean tilings on the plane are given in Fig. \ref{fig:Archi}. These are all the Archimedean tilings on the plane $\mathbb{R}^2$ (cf. \cite{GS1977}). All of these are vertex-transitive maps. 
}
\end{example}

\bigskip

\setlength{\unitlength}{2.5mm}

\caption{{\bf Archimedean tilings on the plane $\mathbb{R}^2$}}\label{fig:Archi}
\end{figure}

\section{Semi-equivelar maps and classification of their vertex covers}\label{sec:proofs-1}

For $1 \le i \le 11$, let $E_i$ be the Archimedean tiling given in Figure \ref{fig:Archi}. Let $V_i = V(E_i)$ be the vertex set of $E_i$. Let $\mathcal{H}_i$ be the group of all the translations of $E_i$. So, $\mathcal{H}_i \le Aut(E_i)$.
Let $X$ be a semi-equivelar toroidal map of vertex type $[3^6]$ or $[6^3]$ or $[4^4]$ or $[3^3, 4^2]$. Then from  \cite{DM2017} and \cite{DU2005}, $X$ is vertex-transitive.
Let $X$ be a semi-equivelar toroidal map of vertex type $[3^2, 4^1, 3^1, 4^1]$ or $[4^1, 8^2]$. Let vertices of $X$ forms $m$ Aut($X$) orbits. Then, $m \le 2$ by Prop. \ref{prop:no-of-orbits} and there exists a vertex-transitive cover of $X$ by Prop. \ref{datta2020}. 
Now, for the types $[3^1,4^1,6^1,4^1]$, $[3^1,6^1,3^1,6^1]$, $[3^1, 12^2]$, $[4^1,6^1,12^1]$, we prove Lemmas \ref{36} - \ref{4612}. We use these lemmas to proof Theorem \ref{thm-main1}.


\begin{lemma}\label{36}
Let $X$ be a $3$-orbital semi-equivelar toroidal map of vertex type $[3^1,4^1,6^1, 4^1]$. Then there exists a $2$-orbital covering $Y\longrightarrow X.$
\end{lemma}
\begin{proof}
Let $X$ be a $3$-orbital semi-equivelar toroidal map of vertex type $[3^1,4^1,6^1, 4^1]$. By Proposition \ref{propo-1} we can assume $X=E_3/\Kc_3$ where $E_3$ (in Fig. \ref{fig:Archi}) is a semi-equivelar map of same vertex type on $\mathbb{R}^2$. $E_3$ is normal cover of $X$ as it is the universal cover. So we can take $\Kc_3$ to be the group of \emph{deck transformations} (\cite[Chapter 1]{AH2002}) of 
\begin{tikzcd}
E_3 \arrow[r, "p_3"] & X.
\end{tikzcd}
Hence, $\Kc_3$ has no fixed point. So $\Kc_3$ consists of translations and glide reflections. Since, $X=E_3/\Kc_3$ is orientable, $\Kc_3$ does not contain any glide reflection. Thus $\Kc_3 \leq \Hc_3$. Let $p_3 : E_3 \to X$ be the canonical covering map. We take $(0,0)$ as the middle point of the line segment joining vertices $u_{0,0}$ and $u_{0,1}$ of $E_3$. Let $A_3,B_3,F_3$ be the vectors form $(0,0)$ to middle point of the line segments joining $u_{1,0}$ and $u_{1,1}; ~ u_{0,2}$ and $u_{0,3};~ u_{-1,2}$ and $u_{-1,3}$ respectively. Then $A_3+F_3=B_3$. Observe that translation by these vectors are smallest possible translational symmetries of $E_3$. Hence, $$\Hc_3= \langle \alpha:z\mapsto z+A_3, \beta:z\mapsto z+B_3 \rangle.$$
Let $\mathcal{K}_3 = \langle \gamma, \delta \rangle \le H_1$ where $\gamma = \alpha^a\circ \beta^b$ and $\delta = \alpha^c\circ \beta^d$ for some $a,b,c,d \in \ZZ$.
Let $ \rho_3 $ be the rotation of $E_3$ by $180$ degrees anticlockwise with respect to origin. Then $\rho_3 \in Aut(E_3)$. Consider the group,
$$\mathcal{G}_3:= \langle \alpha_3,\beta_3,\rho_3 \rangle \leq Aut(E_3).$$
Now, observe that $E_3$ forms $6$ $\Hc_3$-orbits. These are $\mathcal{O}$($u_{0,1}$), $\mathcal{O}$($v_{0,1}$), $\mathcal{O}$($w_{0,1}$), $\mathcal{O}$($u_{0,0}$), $\mathcal{O}$($v_{0,0}$) and $\mathcal{O}$($w_{0,0}$). Now, $\rho_3(u_{0,1})=u_{0,0}, \rho_3(v_{0,1})=u_{0,0}, \rho_3(w_{0,1})=w_{0,0}$. Hence, $E_3$ forms three $\mathcal{G}_3$ orbits.
As $\Kc_3$ contains only translations so $\Kc_3 \leq \mathcal{G}_3$ and also $\Kc_3 \trianglelefteq \mathcal{G}_3$ because $\rho_3\gamma \rho_3^{-1} = \gamma^{-1}$ and $\rho_3\delta \rho_3^{-1} = \delta^{-1}$.
Therefore, $\mathcal{G}_3/\Kc_3$ acts on $X=E_3/\Kc_3$ and $X$ has three $\mathcal{G}_3/\Kc_3$-orbits. $X$ also has three Aut($X$)$=Aut(E_3/\Kc_3)$-orbits and $\mathcal{G}_3/\Kc_3 \leq Aut(E_3/\Kc_3)$.  Hence, for all $\alpha \in Aut(X) \setminus \mathcal{G}_3/\Kc_3, ~~\alpha(\mathcal{O}) = \mathcal{O}$ for all $\mathcal{G}_3/\Kc_3$-orbit $\mathcal{O}$ of $X$. Let $\tau_3$ be the map on $E_3$ obtained by taking reflection about the line passing through the points $u_{0,0}$ and $u_{0,1}$.
Now, consider, $$ \mathcal{G}'_3 = \langle \alpha_3, \beta_3, \rho_3, \tau_3 \rangle .$$
Clearly, $V(E_3)$ forms two $\mathcal{G}'_3$-orbits. 
$\gamma : z \mapsto z+ C$ and $\delta: z \mapsto z+ D$ where $C = aA_3 + bB_3$ and $D = cA_3+dB_3$.
Now, any cover of $X$ will be of the form $E_3/\mathcal{L}_3$ for some subgroup $\mathcal{L}_3$ of $\mathcal{K}_3$ with rank($\mathcal{L}_3$)$ = 2.$ 
We need $E_3/\mathcal{L}_3$ to be two orbital, i.e. $V(E_3/\mathcal{L}_3)$ has 2 orbits under Aut($E_3/\mathcal{L}_3)={\rm Nor}_{Aut(E_3)}(\mathcal{L}_3)/\mathcal{L}_3$ (See (\ref{nor})). Thus, if we get some $\mathcal{L}_3$ such that $\mathcal{L}_3 \trianglelefteq \mathcal{G}_3'$  then $\mathcal{G}_3'/\mathcal{L}_3$ acts on $V(E_3/\mathcal{L}_3)$ and give 2 orbits. Furthermore if we have Nor$(\mathcal{L}_3)=\mathcal{G}_3'$ then $E_3/\mathcal{L}_3$ will become 2-orbital. To proceed in this direction we made our first two claims as follows.

\begin{claim}\label{clm1}
There exists $m_1,n_1,m_2,n_2 \in \ZZ$ such that $\mathcal{L}_3 := \langle \gamma^{m_1}\delta^{n_1}, \gamma^{m_2}\delta^{n_2} \rangle \trianglelefteq \mathcal{G}_3'.$
\end{claim}
\noindent\emph{Proof of Claim \ref{clm1} :}
Suppose $\mathcal{L}_3 = \langle \gamma^{m_1}\delta^{n_1}, \gamma^{m_2}\delta^{n_2} \rangle$. We show that there exists suitable $m_1,n_1,m_2,n_2$ such that $\mathcal{L}_3 \trianglelefteq \mathcal{G}_3'$. 
To satisfy $\mathcal{L}_3 \trianglelefteq \mathcal{G}_3'$ it is enough to show that $\rho_1\gamma^{m_1}\delta^{n_1}\rho_1^{-1}, \rho_1\gamma^{m_2}\delta^{n_2}\rho_1^{-1} \in \mathcal{L}_3$. It is known that conjugation of a translation by rotation and reflection is also a translation by rotated and reflected vector respectively. Since $\gamma$ and $\delta$ are translation by vectors $C$ and $D$ respectively, so $\gamma^{m_1}\delta^{n_1}$ and $\gamma^{m_2}\delta^{n_2}$ are translation by vectors $m_1C+n_1D$ and $m_2C+n_2D$ respectively. Hence, $\rho_1\gamma^{m_1}\delta^{n_1}\rho_1^{-1}$ and $\rho_1\gamma^{m_2}\delta^{n_2}\rho_1^{-1}$ are translation by the vectors $C'$ and $D'$ respectively. Where $C' = \rho_1(m_1C+n_1D) = \rho_1(m_1(aA_3 + bB_3)+n_1(dB_3)) = m_1a\rho_1(A_3) + m_1b\rho(B_3) + n_1d\rho(B_3) = m_1aB_3 + m_1bA_3 + n_1dA_3 = (m_1b+n_1d)A_3+m_1aB_3$ and similarly $D' = \rho_1(m_2C+n_2D) = (m_2b+n_2d)A_3+m_2aB_3$.
Now, these translations belong to $\mathcal{L}_3$ if the vectors $C'$ and $D'$ belong to lattice of $\mathcal{L}_3 = \ZZ(m_1C+n_1D) + \ZZ(m_2C+n_2D)$.
Suppose they belongs. Then $\exists~ p_1, q_1, p_2, q_2 \in \ZZ$ such that 
$$C' = p_1(m_1C+n_1D) + q_1(m_2C+n_2D), ~ D' = p_2(m_1C+n_1D) + q_2(m_2C+n_2D).$$
Putting expressions of $C', D', C, D$ in above equations and using the fact that $\{A_3, B_3\}$ is a linearly independent set we have,
$$p_1m_1a+q_1m_2a=m_1b+n_1d,~~ p_1(m_1b+n_1d)+q_1(m_2b+n_2d)=m_1a;$$
$$p_2m_1a+q_2m_2a=m_2b+n_2d,~~ p_2(m_1b+n_1d)+q_2(m_2b+n_2d)=m_2a.$$
Now, as rank($\mathcal{L}_3$)$=2$ so $m_1n_2-n_1m_2 \neq 0$. 
Now, consider $p_1, q_1, p_2, q_2$ as variables. We can treat above system as a system of linear equations. We can write these systems in matrix form as follows.
$$\begin{bmatrix} m_1a & m_2a &0&0\\ 
m_1b+n_1d & m_2b+n_2d &0 &0 \\
0& 0& m_1a & m_2a \\
0&0& m_1b+n_1d & m_2b+n_2d\end{bmatrix} 
\begin{bmatrix}
p_1\\q_1\\p_2\\q_2
\end{bmatrix} =
\begin{bmatrix}
m_1b+n_1d\\m_1a\\m_2b+n_2d\\m_2a
\end{bmatrix}$$
Now, $C,D$ are linearly independent thus $ad \neq 0$. Hence, the coefficient matrix of the above system has non zero determinant. Therefore the system has an unique solution. After solving we get,
$$p_2 = \frac{(m_2b+n_2d)^2-m_2^2a^2}{ad(m_1n_2-n_1m_2)}
,~~q_2 = \frac{(m_2b+n_2d)(m_1b+n_1d)-a^2m_1m_2}{ad(m_1n_2-n_1m_2)},$$
$$p_1 = -q_2
,~~q_1 = \frac{-(m_1b+n_1d)^2+m_1^2a^2+2n_1^2d^2}{ad(m_1n_2-n_1m_2)}.$$ 
Now, if we take $(m_1, m_2,n_1,n_2)  = (0,d,a^2d,a-b)$ then $p_1, q_1, p_2, q_2 \in \ZZ$.
Let $\mathcal{L}_3 := \langle \gamma^{m_1}\delta^{n_1}, \gamma^{m_2}\delta^{n_2} \rangle$. Matrix of $\mathcal{L}_3$ becomes $\begin{bmatrix}
0&ad\\a^2d^2&ad
\end{bmatrix}$. Thus we have $\mathcal{L}_3 \trianglelefteq \mathcal{G}_3'$. Hence, our Claim \ref{clm1} proved. 

Now, $V(E_3/\mathcal{L}_3)$ forms 2 orbits under the action of $\mathcal{G}_3'/\mathcal{L}_3$. To see whether $E_3/\mathcal{L}_3$ is 2-orbital or not we make the following,

\begin{claim}\label{corr}
${\rm Nor}_{Aut(E_3)}(\mathcal{L}_3) = \mathcal{G}_3'$.
\end{claim}
\noindent\emph{Proof of Claim \ref{corr} :}
Clearly from \ref{clm1} we can see that $\mathcal{G}_3'\leq {\rm Nor}(\mathcal{L}_3)$. so it is enough to show that ${\rm Nor}(\mathcal{L}_3) \leq \mathcal{G}_3'$. Let $\lambda \in {\rm Nor}(\mathcal{L}_3)$. Since, ${\rm Nor}(\mathcal{L}_3)$ is a group and the translations $\alpha, \beta \in {\rm Nor}(\mathcal{L}_3)$ so without loss of generality we can assume $\lambda(0)=0,$ i.e. $\gamma \in {\rm Steb}_{Aut(E_3)}(O)$. Where ${\rm Steb}_{Aut(E_3)}(O)$ denotes the stabilizer of $O$ under the action of $Aut(E_3)$. Now, from symmetries of the tiling $E_3$ one can see that ${\rm Steb}_{Aut(E_3)}(O) = \{R_1,R_2,R_3,R_1',R_2',R_3',\rho_{60}, \rho_{120},\rho_{60}^{-1},\rho_{120}^{-1},$ $\chi,e\}$. Where $\rho_d$ denotes rotation of $E_3$ about origin by angle $d^{\circ}$ anticlockwise. $R_1$, $R_2$, $R_3$ denotes the reflections about the lines passing through origin along $\overrightarrow{OA_3}$, $\overrightarrow{OB_3}$, $\overrightarrow{OF_3}~(F_3:=A_3+B_3)$ respectively. $R_i'$ is the reflection of $E_3$ about the line perpendicular to $R_i$ for $i=1,2,3$. Therefore, it is enough to show $\lambda \notin \{R_2,R_3,R_1',R_3',\rho_{60}, \rho_{120},\rho_{60}^{-1},\rho_{120}^{-1},\}$.
Pictorially one can observe these lines in Figure \ref{refl}.
\begin{figure}
    \centering
    \begin{tikzpicture}[scale=0.8]
	    \node  (0) at (0, 0) {$\bullet$};
		\node  (1) at (3.5, 0) {};
		\node  (2) at (-3.5, 0) {};
		\node  (3) at (2, 3) {};
		\node  (4) at (-2, 3) {};
		\node  (5) at (-2, -3) {};
		\node  (6) at (2, -3) {};
		\node  (7) at (4, 0) {};
		\node  (8) at (4, 0) {$R_2'$};
		\node  (9) at (2.5, 3) {};
		\node  (10) at (2.5, 3) {$R_3'$};
		\node  (11) at (-2.5, 3) {$R_1'$};
		\node  (12) at (3.25, 2) {};
		\node  (13) at (0, 3) {};
		\node  (14) at (0, -3) {};
		\node  (15) at (-3, 1.75) {};
		\node  (16) at (-2.75, -1.75) {};
		\node  (17) at (3.25, -2) {};
		\node  (18) at (0, 3.5) {$R_2$};
		\node  (19) at (-3.5, 2) {$R_3$};
		\node  (20) at (-3, -2) {$R_1$};
		\node  (21) at (2, 0) {$\bullet$};
		\node  (22) at (1, 1.5) {$\bullet$};
		\node  (23) at (-1, 1.5) {$\bullet$};
		\node  (24) at (2, -0.25) {$A_3$};
		\node  (25) at (1, 2) {$B_3$};
		\node  (26) at (-1, 2) {$F_3$};
		\node  (27) at (0.25, 0.30) {$O$};
	
		\draw [style=dashed](2.center) to (1.center);
		\draw [style=dashed](3.center) to (5.center);
		\draw [style=dashed](4.center) to (6.center);
		\draw (13.center) to (14.center);
		\draw (12.center) to (16.center);
		\draw (15.center) to (17.center);
	\end{tikzpicture}
    \caption{Line reflections in $E_3$}
    \label{refl}
\end{figure}
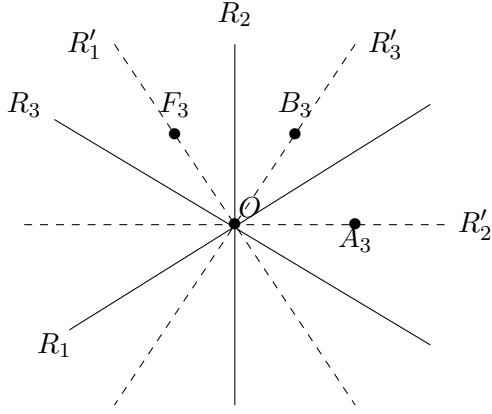
Let matrix of $\mathcal{L}_3$ with respect to the basis $\{\alpha(0)=A_3,\beta(0)=B_3\}$ be  $\begin{bmatrix}
x&y\\z&w
\end{bmatrix}.$
Let us see when $R_2\in {\rm Nor}(\mathcal{L}_3)$. Here, $R_2(\alpha(0))=\beta(0)$ and $R_2(\beta(0))=\beta(0)-\alpha(0)$. Therefore the matrix of the transformation $R_2$ is $\begin{bmatrix}
0&-1\\-1&1
\end{bmatrix}$.
Now, $\begin{bmatrix}
0&-1\\-1&1
\end{bmatrix} \begin{bmatrix}
x&y\\z&w
\end{bmatrix} = \begin{bmatrix}
-z&-w\\z-x&w-y
\end{bmatrix}$.
Therefore $R_2 \in {\rm Nor}(\mathcal{L}_3) \iff \begin{bmatrix}
-z\\z-x
\end{bmatrix}$
and $\begin{bmatrix}
-w\\w-y
\end{bmatrix} \in \mathbb{Z}\begin{bmatrix}
x\\z
\end{bmatrix} + \mathbb{Z}\begin{bmatrix}
y\\w
\end{bmatrix}$.
Suppose they belong. Then $\exists$ $p,q,r,s \in \mathbb{Z}$ such that $-z=px+qy, z-x=pz+qw, -w=rx+sy, w-y=rz+sw$. Hence, we have the following system of linear equations,
$\begin{bmatrix}
x&y\\z&w
\end{bmatrix}\begin{bmatrix}
p\\q
\end{bmatrix} = \begin{bmatrix}
-z\\z-x
\end{bmatrix}, \begin{bmatrix}
x&y\\z&w
\end{bmatrix}\begin{bmatrix}
r\\s
\end{bmatrix} = \begin{bmatrix}
-w\\w-y
\end{bmatrix}$.
Since determinant of the coefficient matrix is non zero so these systems has unique solutions. After solving we get 
$p=\dfrac{xy-yz-zw}{xw-yz}, q=\dfrac{z^2+xz-x^2}{xw-yz}, r=\dfrac{y^2-w^2-yw}{xw-yz}, s=\dfrac{wz+xw-xy}{xw-yz}$.
Therefore, $R_2 \in {\rm Nor}(\mathcal{L}_3) \iff p,q,r,s \in \mathbb{Z}$.
Now, putting $(x,y,z,w) = (0,ad,a^2d^2,ad)$ we get $r=\frac{1}{ad}$. So $r \notin \mathbb{Z}$ unless $ad=1$. But if $ad=1$ then the matrix of our initial given map $X$ will be either $\begin{bmatrix}
1&0\\1&1
\end{bmatrix}$ or $\begin{bmatrix}
1&0\\0&1
\end{bmatrix}$.
So corresponding group by which we are quotienting $E_3$ will be $\langle \alpha+\beta , \beta \rangle$ and $\langle \alpha, \beta \rangle$ respectively. Since these two groups are equal corresponding maps are isomorphic. One can see easily that ${\rm Nor}(\langle \alpha, \beta \rangle) = {\rm Aut}(E_3)$. So it will be one orbital. This contradicts our assumption that $X$ is $3$-orbital. Hence, $ad \neq 1$. Therefore, $R_2 \notin {\rm Nor}(\mathcal{L}_3)$.
Similarly, for $R_3'$ we will get same conditions. Hence, $R_3' \notin {\rm Nor}(\mathcal{L}_3)$.
Proceeding in similar way $R_3$ and $R_1' \in {\rm Nor}(\mathcal{L}_3) \iff$ the followings are integers.
$\dfrac{wx+yx+yz}{xw-yz}, \dfrac{-x^2-2xz}{xw-yz}, \dfrac{y^2+2yw}{xw-yz}, \dfrac{-xy-yz-zw}{xw-yz}$. Putting $(x,y,z,w) = (0,ad,a^2d^2,ad)$ the third one we get $\frac{3}{ad}$. 
Similarly $\rho_{60}, \rho_{120} \in {\rm Nor}(\mathcal{L}_3) \iff $ the followings are integer.
$\dfrac{xy+yz+wz}{xw-yz}, \dfrac{-x^2-xz-z^2}{xw-yz}, \dfrac{y^2+yw+w^2}{xw-yz}, \dfrac{-xy-yz-zw}{xw-yz}$. Putting $(x,y,z,w) = (0,ad,a^2d^2,ad)$ the third one we get $\frac{3}{ad}$. 
The above entries are all become integer if $ad=1$ or $3$. But we observed, $ad \neq 1$. 
Now, if $ad=3$ then  only the maps corresponding to following matrices are possible.
$\begin{bmatrix}
3&0\\0&1
\end{bmatrix}, \begin{bmatrix}
1&0\\1&3
\end{bmatrix},\begin{bmatrix}
1&0\\2&3
\end{bmatrix},\begin{bmatrix}
1&0\\0&3
\end{bmatrix}$. One can easily check that first three matrices will not give $3$-orbital maps and last one is not polyhedral. Since the given map is polyhedral, $ad \neq 3$. Therefore, ${\rm Nor}(\mathcal{L}_3) \leq \mathcal{G}_3'$. This proves Claim \ref{corr}.

Now, action of $\mathcal{G}_3'$ on $V(E_3)$ gives $2$ orbits. From (\ref{nor}) we have $${\rm Aut}(E_3/\mathcal{L}_3) = {\rm Nor_{Aut(E_3)}}(\mathcal{L}_3)/\mathcal{L}_3 = \mathcal{G}_3'/\mathcal{L}_3.$$ Hence, the map $Y := E_3/\mathcal{L}_3$ is $2$-orbital.

Clearly $Y$ is a toroidal cover of $X$ and $V(Y)$ forms two orbits under that action on Aut$(Y)$. This proves our Lemma \ref{36}.
\end{proof}



\begin{lemma}\label{3636}
Let $X$ be a $3$-orbital semi-equivelar toroidal map of vertex type $[3^1, 6^1,3^1,6^1]$. Then there exists a $2$-orbital covering $Y\longrightarrow X.$
\end{lemma}
\begin{proof}
Let $X$ be a $3$-orbital semi-equivelar toroidal map of vertex type $[3^1, 6^1,3^1,6^1]$. By Proposition \ref{propo-1} we can assume $X=E_4/\Kc_4$ where $E_4$ (in Fig. \ref{fig:Archi}) is a semi-equivelar map of same vertex type on $\mathbb{R}^2$. $E_4$ is normal cover of $X$ as it is the universal cover so we can take $\Kc_4$ to be the group of deck transformations of 
\begin{tikzcd}
E_4 \arrow[r, "p_4"] & X
\end{tikzcd}.
Hence, $\Kc_4$ has no fixed point. So $\Kc_4$ consists of translations and glide reflections. Since, $X=E_4/\Kc_4$ is orientable, $\Kc_4$ does not contain any glide reflection. Thus $\Kc_4 \leq \Hc_4$. Let $p_4 : E_4 \to X$ be the canonical covering map. We take $(0,0)$ as the middle point of the line segment joining vertices $u_{-1,0}$ and $u_{0,0}$ of $E_4$. Let $A_4,B_4,F_4$ be the vectors form $(0,0)$ to middle point of the line segments joining $u_{0,0}$ and $u_{1,0}; ~ u_{-1,1}$ and $u_{0,1};~ u_{-2,1}$ and $u_{-1,1}$ respectively. Then $A_4+F_4=B_4$. Observe that translation by these vectors are smallest possible translational symmetries of $E_4$. Hence, 
$$\Hc_4= \langle \alpha_4:z\mapsto z+A_4, \beta_4:z\mapsto z+B_4 \rangle.$$
Let $ \rho_4 $ be the reflection of $E_4$ with respect to the line joining origin and $u_{-1,1}$. Then $\rho_4 \in Aut(E_4)$. Consider the group 
$$\mathcal{G}'_4:= \langle \alpha_4,\beta_4, \tau_4,\rho_4 \rangle \leq Aut(E_4).$$ Where $\tau_4$ is the map obtained by taking reflection about origin in $E_4$.
Let $\mathcal{G}_4 = \Hc_4 \rtimes \mathbb{Z}_2 $ where $\mathbb{Z}_2$ is generated by $\tau_4$.
Now, observe that $E_4$ forms $3$ $\mathcal{G}_4$-orbits. These are $\mathcal{O}$($u_{0,0}$), $\mathcal{O}$($v_{0,0}$), $\mathcal{O}$($w_{0,0}$). Now, $\rho_4(u_{0,0})=u_{-1,0}, \rho_4(v_{0,0})=w_{0,0}$. Hence, $E_4$ forms $2$ $\mathcal{G}'_4$ orbits.
As $\Kc_3$ contains only translations so  $\Kc_4 \trianglelefteq \mathcal{G}_4.$
Therefore, $\mathcal{G}_4/\Kc_4$ acts on $X=E_4/\Kc_4$ and $X$ has $3$ $\mathcal{G}_4/\Kc_4$-orbits. $X$ also has $3$ Aut($X$)$=Aut(E_4/\Kc_4)$-orbits and $\mathcal{G}_4/\Kc_4 \leq Aut(E_4/\Kc_4)$ Hence, for all $\alpha \in Aut(X) \setminus \mathcal{G}_4/\Kc_4, ~~\alpha(\mathcal{O}) = \mathcal{O}$ for all $\mathcal{G}_4/\Kc_4$-orbit $\mathcal{O}$ of $X$.\\
As $\Kc_4 \leq \Hc_4 = \langle \alpha_4,\beta_4 \rangle = \{ m\alpha_4+n\beta_4 \mid (m,n)\in \mathbb{Z}\times \mathbb{Z} \}$ so 
$$\Kc_4 = \langle \gamma:z\mapsto z+C_4,\delta:z\mapsto z+D_4 \rangle,$$
where $C_4 = aA_4+bB_4$ and $D_4=cA_4+dB_4$, for some $a,b,c,d \in \mathbb{Z}$.
Proceedings in the same way as in Lemma \ref{36} one can prove followings,

\begin{claim}\label{L4}
There exists $m_1,n_1,m_2,n_2 \in \ZZ$ such that $\mathcal{L}_4 := \langle \gamma^{m_1}\delta^{n_1}, \gamma^{m_2}\delta^{n_2} \rangle \trianglelefteq \mathcal{G}_4'.$
\end{claim}

\begin{claim}\label{clm3}
${\rm Nor}_{Aut(E_4)}(\mathcal{L}_4) = \mathcal{G}_4'$.
\end{claim}


Clearly $Y:E_4/\mathcal{L}_4$ is a toroidal cover of $X$ and one can show in similar way as in Lemma \ref{36} that $V(Y)$ forms two orbits under the action of Aut($Y$). Hence, our Lemma \ref{3636} is proved.
\end{proof}


\begin{lemma}\label{312}
Let $X$ be a $3$-orbital semi-equivelar toroidal map of vertex type $[3^1, 12^2]$. Then there exists a $2$-orbital covering $Y\longrightarrow X.$
\end{lemma}
\begin{proof}

Let $X$ be a semi-equivelar toroidal map of vertex type $[3^1, 12^2]$. By Proposition \ref{propo-1} we can assume $X=E_6/\Kc_6$ where $E_6$ (in Fig. \ref{fig:Archi}) is a semi-equivelar map of same vertex type on $\mathbb{R}^2$. $E_6$ is normal cover of $X$ as it is the universal cover so we can take $\Kc_6$ to be the group of deck transformations of 
\begin{tikzcd}
E_6 \arrow[r, "p_6"] & X
\end{tikzcd}.
Hence, $\Kc_6$ has no fixed point. So $\Kc_6$ consist of translations and glide reflections. Since, $X=E_6/\Kc_6$ is orientable, $\Kc_6$ does not contain any glide reflection. Thus $\Kc_6 \leq \Hc_6$. Let $p_6 : E_6 \to X$ be the canonical covering map. We take $(0,0)$ as the middle point of the line segment joining vertices $v_{0,0}$ and $v_{1,1}$ of $E_6$. Let $A_6,B_6,F_6$ be the vectors form $(0,0)$ to middle point of the line segments joining $u_{1,0}$ and $u_{2,1}; ~ v_{0,1}$ and $v_{1,2};~ v_{-1,2}$ and $v_{-2,1}$ respectively. Then $A_6+F_6=B_6$. Observe that translation by these vectors are smallest possible translational symmetries of $E_6$. Hence, 
$$\Hc_6= \langle \alpha_6:z\mapsto z+A_6, \beta_6:z\mapsto z+B_6 \rangle.$$

Let $ \rho_6 $ be the map obtained by taking 180 degree rotation of $E_6$  with respect to the origin. Then $\rho_6 \in Aut(E_6)$. Consider the group 
$$\mathcal{G}_6:= \langle \alpha_6,\beta_6,\rho_6 \rangle \leq Aut(E_6).$$
Now, observe that $E_6$ forms $6$ $\Hc_6$-orbits. These are $\mathcal{O}$($u_{-2,1}$), $\mathcal{O}$($u_{-1,0}$), $\mathcal{O}$($v_{0,0}$), $\mathcal{O}$($v_{1,0}$), $\mathcal{O}$($w_{-2,1}$) and $\mathcal{O}$($v_{0,1}$). Now, $\rho_6(u_{-2,1})=u_{1,0}, \rho_6(v_{0,0})=v_{1,1}, \rho_6(v_{1,0})=v_{0,1}$. Hence, $E_6$ forms three $\mathcal{G}_6$ orbits.
As $\Kc_6$ contains only translations so $\Kc_6 \leq \mathcal{G}_6$ and also $\Kc_6 \trianglelefteq \mathcal{G}_6$ because of same reason as in Lemma \ref{36}.
Therefore, $\mathcal{G}_6/\Kc_6$ acts on $X=E_6/\Kc_6$ and $X$ has three $\mathcal{G}_6/\Kc_6$-orbits. $X$ also has $3$ Aut($X$)=Aut$(E_6/\Kc_6)$-orbits and $\mathcal{G}_6/\Kc_6 \leq Aut(E_6/\Kc_6)$ Hence, for all $\alpha \in Aut(X) \setminus \mathcal{G}_6/\Kc_6 ~~\alpha(\mathcal{O}) = \mathcal{O}$ for all $\mathcal{G}_6/\Kc_6$-orbit $\mathcal{O}$ of $X$.
Let $\tau_6$ be the map obtained by taking reflection of $E_6$ with respect to the line passing through origin and perpendicular to the vector $\overrightarrow{OA_6}$. Then $\tau_6 \in Aut(E_6)$. Now, consider,
$$\mathcal{G}'_6 = \langle \alpha_6, \beta_6, \tau_6, \rho_6 \rangle .$$
Clearly $E_6$ forms two $\mathcal{G}'_6$ orbits.
As $\Kc_6 \leq \Hc_6 = \langle \alpha_6,\beta_6 \rangle = \{ m\alpha_6+n\beta_6 \mid (m,n)\in \mathbb{Z}\times \mathbb{Z} \}$ so 
$$\Kc_6 = \langle \gamma:z\mapsto z+C_6,\delta:z\mapsto z+D_6 \rangle,$$
where $C_6 = aA_6+bB_6$ and $D_6=cA_6+dB_6$, for some $a,b,c,d \in \mathbb{Z}$.\\
Now, following the same procedure as in Lemma \ref{36} we can prove the following claims,

\begin{claim} 
There exists $m_1,n_1,m_2,n_2 \in \ZZ$ such that $\mathcal{L}_6 := \langle \gamma^{m_1}\delta^{n_1}, \gamma^{m_2}\delta^{n_2} \rangle \trianglelefteq \mathcal{G}_6'.$

\end{claim}

\begin{claim} \label{orb6}
${\rm Nor}_{Aut(E_6)}(\mathcal{L}_6) = \mathcal{G}_6'$.
\end{claim}



$V(Y)$ forms two $\mathcal{G}'_6/\mathcal{L}_6$-orbits and from Claim \ref{orb6} it follows that $V(Y)$ has two Aut($Y$) orbits. Thus $Y$ is a $2$-orbital cover of $X$. This proves our Lemma \ref{312}.
\end{proof}


\begin{lemma}\label{4612}
If $X$ is a $m$-orbital toroidal map of type $[4^1,6^1,12^1]$, then there exists a covering $\eta_{k} \colon Y_{k} \to X$ where $Y_{k}$ is $k$-orbital for each $k (\neq 4, 5) \le m$ except $(m, k) = (3,2)$.
\end{lemma}
\begin{proof}
Let $X$ be a semi-equivelar toroidal map of vertex type $[4^1, 6^1, 12^1]$. By Proposition \ref{propo-1} we can assume $X=E_7/\Kc_7$ where $E_7$ (in Fig. \ref{fig:Archi}) is a semi-equivelar map of same vertex type on $\mathbb{R}^2$. $E_7$ is normal cover of $X$ as it is the universal cover so we can take $\Kc_7$ to be the group of deck transformations of 
\begin{tikzcd}
E_7 \arrow[r, "p_7"] & X.
\end{tikzcd}
Hence, $\Kc_7$ has no fixed point. So $\Kc_7$ consists of translations and glide reflections. Since, $X=E_7/\Kc_7$ is orientable, $\Kc_7$ does not contain any glide reflection. Thus $\Kc_7 \leq \Hc_7$. Let $p_7 : E_7 \to X$ be the canonical covering map. We take $(0,0)$ as the middle point of the line segment joining vertices $u_{0,0}$ and $u_{0,1}$ of $E_7$. Let $A_7,B_7,F_7$ be the vectors form $(0,0)$ to middle point of the line segments joining $u_{1,0}$ and $u_{1,1}; ~ u_{0,2}$ and $u_{0,3};~ u_{-1,2}$ and $u_{-1,3}$ respectively. Then $A_7+F_7=B_7$. Observe that translation by these vectors are smallest possible translational symmetries of $E_7$. Hence, 
$$\Hc_7= \langle \alpha_7:z\mapsto z+A_7, \beta_7:z\mapsto z+B_7 \rangle.$$
As $\Kc_7 \leq \Hc_7 = \langle \alpha_7,\beta_7 \rangle = \{ m\alpha_7+n\beta_7 \mid (m,n)\in \mathbb{Z}\times \mathbb{Z} \}$ so 
$$\Kc_7 = \langle \gamma:z\mapsto z+C_7,\delta:z\mapsto z+D_7 \rangle,$$
where $C_7 = aA_7+bB_7$ and $D_7=cA_7+dB_7$, for some $a,b,c,d \in \mathbb{Z}$.
Suppose vertices of $X$ forms $m$ Aut($X$) orbits. From \cite{DM2018} we get $m\le 6$. Here the following questions comes.
Given $m$-orbital map does there exists a $k$-orbital cover?
for $(m,k)=(6,2),(6,3),(6,4),(6,5),(5,4),(5,$ $3) (5,2),(4,3),(4,2),(3,2).$ 
Let's answer this questions case by case.

\texttt{Case 1.}
Suppose $X$ is six orbital map.
Let $\tau_7$  be the map obtained by taking 180 degree rotation of $E_7$ about origin. Consider the group,
$$\mathcal{G}_7 = \langle \alpha_7,\beta_7,\tau_7 \rangle.$$
Now, vertices of $E_7$ forms $12$ $\Hc_7$ orbits. They are represented by vertices of any $12$-gon. Therefore, vertices of $E_7$ forms, $6$ $\mathcal{G}_7$ orbits. As $\Kc_7$ contains only translations so $\Kc_7 \leq \mathcal{G}_7$. $\Kc_7 \trianglelefteq \mathcal{G}_7.$
Therefore, $\mathcal{G}_7/\Kc_7$ acts on $X=E_7/\Kc_7$ and $X$ has six $\mathcal{G}_7/\Kc_7$-orbits. $X$ also has six Aut($X$)=Aut$(E_7/\Kc_7)$-orbits and $\mathcal{G}_7/\Kc_7 \leq Aut(E_7/\Kc_7)$ Hence, for all $\alpha \in Aut(X) \setminus \mathcal{G}_7/\Kc_7 ~~\alpha(\mathcal{O}) = \mathcal{O}$ for all $\mathcal{G}_7/\Kc_7$-orbit $\mathcal{O}$ of $X$.
Suppose $\gamma_7$ be the map obtained by taking reflection of $E_7$ about the line passing through origin and parallel to the vector $B_7$. Consider the group,
$$ \mathcal{G}'_7 := \langle \alpha_7,\beta_7,\tau_7, \gamma_7 \rangle.$$
Observe that $E_7$ forms two $ \mathcal{G}'_7$-orbits.
Now, proceeding in same way as in Lemma \ref{36} we can prove the following claims.
\begin{claim} 
There exists $m_1,n_1,m_2,n_2 \in \ZZ$ such that $\mathcal{L}_7 := \langle \gamma^{m_1}\delta^{n_1}, \gamma^{m_2}\delta^{n_2} \rangle \trianglelefteq \mathcal{G}_7'.$

\end{claim}

\begin{claim} \label{orb7}
${\rm Nor}_{Aut(E_7)}(\mathcal{L}_7) = \mathcal{G}_7'$.
\end{claim}


$V(Y)$ forms $2$ $\mathcal{G}'_7/\mathcal{L}_7$-orbits and from Claim \ref{orb7} it follows that $V(Y)$ has $2$ Aut($Y$) orbits. Thus $Y$ is a $2$-orbital cover of $X$. 


{\texttt{Case 2.}}
Suppose $X$ is $6$ orbital map. Define $\mathcal{G}_7$ as in Case 1. Let $\omega_7$ be the map obtained by taking reflection about the line parpendicular to $OA_7$ and passing through origin. Here consider,
$$ \mathcal{G}'_7 := \langle \alpha_7,\beta_7,\tau_7, \omega_7 \rangle.$$
Observe that $E_7$ forms $3$ $ \mathcal{G}'_7$-orbits. The result follows in this case by exactly same way as in Case 1. Here we get $Y$ be a $3$-orbital cover of $X$. To answer other questions we make the following,

\begin{claim}\label{lem1}
Let $X$ be a map and $G \le$ Aut($X$). Suppose $V(X)$ has $m$ $G$-orbits. Then there exists a group $\widetilde{G} \le$ Aut($E$) such that $V(E)$ has $m$ $\widetilde{G}$-orbits.
\end{claim}
\noindent\emph{Proof of Claim \ref{lem1} :}
Let $X$ be a semi-equivelar toroidal map of vertex type $[p_1^{n_1} \dots p_k^{n_k}]$. By proposition \ref{propo-1} we get $X = E_j/K$ for some discrete subgroup of Aut($E_j$) where $E_j$ is Archimedean tilling of $\mathbb{R}^2$. Let $\mathcal{O}_1, \dots \mathcal{O}_m$ be $G$-orbits of $V(X)$. Let $p:E_j\to X$ be the covering map. Then $\{p^{-1}(\mathcal{O}_i) | i = 1,2, \dots m\}$ be a partition of $V(E_j)$. 
Now, consider $\widetilde{G} = {\rm Nor}_{Aut(E_j)}(K)$. 
$V(E_j)$ forms $m$ $\widetilde{G}$-orbits. This proves our Claim \ref{lem1}.

\texttt{Case 3.}
Suppose, $X$ be a $3$-orbital toroidal map and $Y$ be a $2$-orbital cover of $X$. Then $X=E_7/\Kc_7$ and $Y=E_7/\mathcal{L}_7$ say. Let $G = \langle \alpha_7, \beta_7, \chi_7, \lambda_7 \rangle \le Aut(E_7)$ where $\chi_7$ and $\lambda_7$ be the maps obtained by taking reflection of $V(E_7)$ about the origin and the line $OA_7$ respectively.
Then $V(E_7)$ has $3$ $G$ orbits. $ \Kc_7 \trianglelefteq G.$
Let $H:= G/\Kc_7$. $H$ acts on $V(X)$ and gives $3$ orbits. 
Now, by Claim \ref{lem1} there exists $\Tilde{G} \le {\rm Aut}(E_7)$ containing $G $ such that $V(E_7)$ $2$ $\Tilde{G}$-orbits. But one can check that if we add any type of rotation or reflection in $G$ then number of $G$ orbits on $V(E_7)$ is either $1$ or $3$ not 2.  
Therefore there does not exists a 2-orbital cover of a 3-orbital map.
\smallskip

In other cases no such cover exists and it follows from Theorem \ref{thm-main0}.
\end{proof}
\begin{proof}[Proof of Theorem \ref{thm-main0}]
Suppose there exists a toroidal map $X$ of vertex type $[4^1,6^1,12^1]$ with $5$ Aut($X$)-orbits.  
By Claim \ref{lem1} there exists a group $\widetilde{G} \leq Aut(E_7)$ such that the action of $\widetilde{G}$ on $V(E_7)$ produces $5$ orbits. 
We know the isometries of the plane they are translations, rotations, reflections, and glide reflections. As we want finite number of orbits so $\widetilde{G}$ must contain translation group $H$. Now, let's see what are the possibility of $\widetilde{G}$.

\smallskip

\noindent{\textbf{Case 1.}} $\widetilde{G}=H$.

In this case number of orbits is $12$. Thus this case is not possible.

\smallskip

\noindent{\textbf{Case 2.}} $\widetilde{G}$ generated by $H$ and one rotation.

Only rotational symmetry of $E_7$ are rotation by angles $60,120,180$ degrees either clockwise or anticlockwise. If $\widetilde{G}$ contains $60$ degree, $120$ degree, $180$ degree rotation then number of orbits are $2, 3, 6$, respectively. None of them giving $5$ orbit.  Thus this case is also not possible.

\smallskip

\noindent{\textbf{Case 3.}} $\widetilde{G}$ generated by $H$ and one reflection.

Only reflectional symmetry of $E_7$ are reflection with respect to lines passing through origin and parallel to $A_7, B_7, F_7, \frac{A_7+B_7}{2}, \frac{B_7+F_7}{2}, \frac{F_7-A_7}{2}.$ Each of these cases number of orbits are $6$. Thus this case is also not possible.

\smallskip

\noindent{\textbf{Case 4.}} $\widetilde{G}$ generated by $H$ and two reflection.

Let $S_1 = \{R_{A_7}, R_{B_7}, R_{C_7}\}$ and $S_2 = \{R_{\frac{A_7+B_7}{2}}, R_{\frac{B_7+F_7}{2}}, R_{\frac{F_7-A_7}{2}}\}$ where $R_P$ denotes reflection about the line passing through origin and parallel to the vector $\overrightarrow{OP}$. Clearly one can check that if two reflections in $\widetilde{G}$ belongs to $S_1$ or $S_2$ then number of orbits is two and if one reflection belongs to $S_1$ and one in $S_2$ then number of orbits is either $3$ or $1$. Thus this case is also not possible.

\smallskip

\noindent{\textbf{Case 5.}} $\widetilde{G}$ generated by $H$ and one reflection and one rotation.

Clearly one can check that number of orbits in this case are either $1$ or $2$ or $3$. So this case is also not possible. Observe that these symmetries does not giving 4 orbits also.

Hence, there does not exist a subgroup $\widetilde{G}$ of Aut($E_7$) such that $E_7$ has 4 or 5 $\widetilde{G}$ orbits. 

Maps of type $[3^4, 6^1]$ have at least 1 vertex orbit, and
can have at most 3 vertex orbits. The only possible extra symmetries in this case are rotations,
since all maps of this kind are chiral (not invariant under orientation
reversing automorphisms). Furthermore, all half-turns in the symmetry group of the Archimedean tiling $[3^4, 6^1]$ normalize any translation subgroup, and so they are symmetries of every map on the torus with this type. It follows that only 3-fold and 6-fold rotations can add more
symmetry to a map on the torus with this type. Both of them collapse the three vertex orbits into only one, not two. Hence, there does not exists a 2-orbital map of type $[3^4,6^1]$ on torus.
\end{proof}

\begin{proof}[Proof of Theorem \ref{thm-main1}]
The proof follows from Lemmas  \ref{36}, \ref{3636},  \ref{312},  \ref{4612}.
\end{proof}

Now, we discuss the number of sheets of the cover $Y\longrightarrow X$ obtained in above lemmas in the next claim.
\begin{claim}\label{sheet} 
$Y$ is a $(m_1n_2-m_2n_1)$ sheeted covering of $X$.
\end{claim}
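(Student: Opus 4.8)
The plan is to identify the number of sheets of $\eta\colon Y\to X$ with a group index and then compute that index inside the translation lattice $\mathcal{H}_3$. First I would observe that, since $\mathcal{L}_3\trianglelefteq\mathcal{K}_3$ and $\mathcal{K}_3$ acts freely on $E_3$ (its nontrivial elements are nonzero plane translations, none of which fixes a vertex of the tiling $E_3$), the fibre of $\eta$ over a vertex $v+\mathcal{K}_3$ of $X$ is in natural bijection with the coset space $\mathcal{K}_3/\mathcal{L}_3$: for $k_1,k_2\in\mathcal{K}_3$ one has $(v+k_1)+\mathcal{L}_3=(v+k_2)+\mathcal{L}_3$ in $Y$ if and only if $k_1-k_2\in\mathcal{L}_3$. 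As $X$ is connected, the number of sheets of $\eta$ is the common cardinality of its fibres, namely $[\mathcal{K}_3:\mathcal{L}_3]$, so it suffices to prove $[\mathcal{K}_3:\mathcal{L}_3]=m^2/|ad-bc|$.

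Next I would pass to the identification $\mathcal{H}_3\cong\mathbb{Z}^2$ determined by the basis $\{\alpha_3,\beta_3\}$ (equivalently, by writing a translation $z\mapsto z+xA_3+yB_3$ as the pair $(x,y)$). Under this identification $\mathcal{L}_3=\langle\alpha_3^{m},\beta_3^{m}\rangle$ corresponds to the sublattice $m\mathbb{Z}^2$, so $[\mathcal{H}_3:\mathcal{L}_3]=m^2$; and $\mathcal{K}_3=\langle\eta_3,\zeta_3\rangle$, with $C_3=aA_3+bB_3$ and $D_3=cA_3+dB_3$, corresponds to the sublattice generated by $(a,b)$ and $(c,d)$, whose index in $\mathbb{Z}^2$ equals $|ad-bc|$, the absolute value of the determinant of the integer matrix with rows $(a,b)$ and $(c,d)$. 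This determinant is nonzero because $C_3$ and $D_3$ are linearly independent, as already noted in the proof of Claim \ref{claim1}.

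Finally, since $\mathcal{L}_3\le\mathcal{K}_3\le\mathcal{H}_3$ with all indices finite, multiplicativity of the index gives $[\mathcal{H}_3:\mathcal{L}_3]=[\mathcal{H}_3:\mathcal{K}_3]\cdot[\mathcal{K}_3:\mathcal{L}_3]$, hence $[\mathcal{K}_3:\mathcal{L}_3]=m^2/|ad-bc|$; combining with the first paragraph, $Y$ is an $m^2/|ad-bc|$-sheeted cover of $X$. (As a byproduct this re-derives that $|ad-bc|$ divides $m^2$, so the quantity is indeed a positive integer.) The argument is essentially formal; the only step deserving a moment's care is the identification of the fibre with $\mathcal{K}_3/\mathcal{L}_3$, i.e.\ checking freeness of the actions so that distinct cosets yield distinct vertices of $Y$, which is immediate here because every group in sight is a lattice of plane translations.
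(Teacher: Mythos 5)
Your argument is correct, and it reaches the conclusion by a partly different route than the paper. For the topological step, the paper invokes the covering-space machinery of Results \ref{result1} and \ref{result2}: since $E_3$ is the universal cover, $\pi_1(E_3/\Kc_3)\cong\Kc_3$ and $\pi_1(E_3/\mathcal{L}_3)\cong\mathcal{L}_3$ via deck transformations, and the number of sheets of $Y\to X$ is then the index of the image of $\pi_1(Y)$ in $\pi_1(X)$, i.e.\ $[\Kc_3:\mathcal{L}_3]$. You instead identify the fibre of $\eta$ over a point $v+\Kc_3$ directly with the coset space $\Kc_3/\mathcal{L}_3$, using only that all groups involved are lattices of plane translations acting freely, together with connectedness of $X$ to make the sheet number well defined; this is more elementary and self-contained, at the cost of redoing by hand what the quoted results package. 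For the algebraic step both arguments coincide: $[\Kc_3:\mathcal{L}_3]=[\Hc_3:\mathcal{L}_3]/[\Hc_3:\Kc_3]=m^2/|ad-bc|$ by multiplicativity of indices in $\Hc_3\cong\mathbb{Z}^2$; you additionally justify $[\Hc_3:\Kc_3]=|ad-bc|$ as the absolute determinant of the integer matrix with rows $(a,b)$ and $(c,d)$, a detail the paper leaves implicit, and your observation that $|ad-bc|$ must therefore divide $m^2$ is a correct (and worthwhile) sanity check that the sheet number is a positive integer.
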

\noindent\emph{Proof of Claim \ref{sheet} :}
To proof this we need following two results from the theory of covering spaces.

\begin{result}\label{result1} (\cite{AH2002})
Let $p:(\widetilde{X},\widetilde{x_0})\to (X,x_0)$ be a path-connected covering space of the path-connected, locally path-connected space $X$, and let $H$ be the subgroup $p_*(\pi_1(\widetilde{X},\widetilde{x_0})) \subset \pi_1(X,x_0).$ Then,
\begin{enumerate}
    \item[1.] This covering space is normal if and only if $H$ is a normal subgroup of $\pi_1(X,x_0)$
    \item[2.] $G(\widetilde{X})$(the group of deck transformation of the covering $\widetilde{X}\to X$) is isomorphic to $N(H)/H$ where $N(H)$ is the normalizer of $H$ in $\pi_1(X,x_0)$.
\end{enumerate}
In particular, $G(\widetilde{X})$ is isomorphic to $\pi_1(X,x_0)/H$ if $\widetilde{X}$ is a normal covering. Hence, for universal cover $\widetilde{X}\to X$ we have $G(\widetilde{X}) \simeq \pi_1(X)$.
\end{result} 

\begin{result}\label{result2} (\cite{AH2002})
The number of sheets of a covering space $p:(\widetilde{X},\widetilde{x_0})\to (X,x_0)$ with $X$ and $\widetilde{X}$ path-connected equals the index of $p_*(\pi_1(\widetilde{X},\widetilde{x_0}))$ in $\pi_1(X,x_0)$.
\end{result}

In our case applying Result \ref{result1} for the covering $E_i\to E_i/\Kc_i$ we get $\pi_1(E_i/\Kc_i) = \Kc_i$. For the covering $E_i \to E_i/\mathcal{L}_i$ we get $\pi_1(E_i/\mathcal{L}_i) = \mathcal{L}_i$. Thus applying Result \ref{result2} we get number of sheets of $Y$ over $X$ is $=n:=[\Kc_i:\mathcal{L}_i] = (m_1n_2-m_2n_1)$ for all $i = 3,4,5,6, 7$.  This proves our Claim \ref{sheet}. 

Now, we prove  Theorem \ref{thm-main2}.

\begin{proof}[Proof of Theorem \ref{thm-main2}]
Let $X$ be a map of type $[p_1^{n_1}, \dots ,p_k^{n_k}]$. Then form Proposition \ref{datta2020} we get $X = E/K$ for some discrete subgroup $K$ of Aut($E$) where $E$ is the Archimedean tiling of the plane of same vertex type. Now, $Y$ covers $X$ if and only if  $Y = E/L$ for some subgroup $L$ of $K$ generated by $2$ translations corresponding to $2$ independent vectors. Let $K = \langle \gamma , \delta \rangle$. Now, consider $L_n = \langle \gamma^n, \delta \rangle$ and $Y_n = E/L_n$. then $Y_n$ covers $X$.
Number of sheets of the cover $Y_n \longrightarrow X$ is equal to $[K : L_n] = n.$ Hence, $Y_n$ is our required $n$-sheeted cover of $X$.
\end{proof}

Next, we prove  Theorems \ref{thm-main3} and \ref{thm-main4}. For that we need following definitions.

We call two maps are isomorphic if they are isomorphic as maps. Two maps are equal if the orbits of $\mathbb{R}^2$ under the action of corresponding groups are equal as sets.
Let $X$ and $K$ be as in the proof of Theorem \ref{thm-main2}. Let $n \in \mathbb{N}$. 
Let $Y =  E/L$ be $n$-sheeted cover of $X$. 
Let $L = \langle \omega_1, \omega_2\rangle$. Where $\omega_1, \omega_2 \in K = \langle \gamma, \delta \rangle$. Suppose $\omega_1 = \gamma^a \circ \delta^b$ and $\omega_2 = \gamma^c \circ \delta^d$ where $a,b,c,d \in \ZZ$. Define $M_Y = \begin{bmatrix} a & c\\b &d \end{bmatrix}$.
We represent $Y$ by the associated matrix $M_Y$. This matrix representation corresponding to a map is unique as $\gamma$ and $\delta$ are translations along two linearly independent vectors. Denote area of the torus $Y$ by $\Delta_Y$. As $Y$ is $n$-sheeted covering of $X$ so $\Delta_Y = n\Delta_X \implies$ area of the parallelogram spanned by $w_1$ and $w_2 = n \times$ area of the parallelogram spanned by $\gamma$ and $\delta$. That means $|det(M_Y)| = n$. Therefore for each $n$-sheeted covering, the associated matrix belongs to 
$$ \mathcal{S}:=\{ M \in GL(2,\ZZ): |det(M)| = n \}.$$ 
conversely, for every element of $\mathcal{S}$ we get a $n$-sheeted covering $Y$ of $X$ by associating $\begin{bmatrix} a & c\\b &d \end{bmatrix}$ to $E/\langle a\gamma + b\delta, c\gamma +d\delta\rangle$. So there is an one to one correspondence to $n$-sheeted covers of $X$ and $\mathcal{S}$. To proceed further we need following two lemmas.
\begin{lemma}\label{equl}
Let $Y_1$ and $Y_2$ be maps and $M_1$ and $M_2$ be associated matrix of them respectively. Then $Y_1 = Y_2$ if and only if  there exists an unimodular matrix (an integer matrix with determinant $1$ or $-1$) $U$ such that $M_1U = M_2.$  
\end{lemma}
\begin{proof}
Let $Y_1= E/\mathcal{L}_1$ and  $Y_2= E/\mathcal{L}_2$.  Suppose, $Y_1 = Y_2$. Let $i:Y_1\to Y_2$(the identity map) be an isomorphism. We can lift  $i$ to $\widetilde{i} \in Aut(E)$. Then $\widetilde{i}$ will take fundamental parallelogram of $Y_1$ to that of $Y_2$. Hence, the latices formed by $\mathcal{L}_1$ and $\mathcal{L}_2$ are same say $\Lambda$. $\widetilde{i}$ transforms $\Lambda$ to itself. Therefore from \cite{HW1979}(Theorem 32, Chapter 3) we get matrix of the transformation is unimodular. One side of our lemma follows from this.
\\
conversely, suppose $M_1U = M_2$ where $U$ is an unimodular matrix. Let $M_1 = (w_1~ w_2), M_2 = (w_1'~w_2') $ and $U = \begin{bmatrix} a & b\\c &d \end{bmatrix}$ where $w_i, w_i'$ are column vectors for $i=1,2$.
Therefore 
$$ M_2 = M_1U \implies (w_1'~ w_2') = (w_1~ w_2)\begin{bmatrix} a & b\\c & d \end{bmatrix} = (aw_1+cw_2~~bw_1+dw_2).$$
Now, suppose $\mathcal{L}_1 = \langle \alpha_1, \beta_1 \rangle$ and $\mathcal{L}_2 = \langle \alpha_2, \beta_2 \rangle$ and $A_i, B_i$ be the vectors by which $\alpha_i$ and $\beta_i$ translating the plane for $i=1,2$ and let $C$ and $D$ be the vectors corresponding to $\gamma$ and $\delta$.
Let $$A_1 = p_1C + q_1D, B_1 = s_1C + t_1D,$$ 
$$A_2 = p_2C + q_2D, B_2 = s_2C + t_2D.$$
Now, $w_1' = \begin{pmatrix} p_2 \\ q_2\end{pmatrix} = a\begin{pmatrix} p_1 \\ q_1\end{pmatrix} + c\begin{pmatrix} s_1 \\ t_1\end{pmatrix} = \begin{pmatrix} ap_1+cs_1 \\ aq_1+ct_1\end{pmatrix}$.
Therefore 
\begin{equation*}
    \begin{split}
        A_2 &= (ap_1 + cs_1)C + (aq_1+ct_1)D \\&= a(p_1C + q_1D) + c(s_1C + t_1D) \\&= aA_1 + cB_1
    \end{split}
\end{equation*}
Hence, $\alpha_2 \in \mathcal{L}_1$. Similarly $\beta_2 \in \mathcal{L}_1$. Therefore $\mathcal{L}_2 \le \mathcal{L}_1$. Proceeding in the similar way and using the fact that $det(U) = \pm 1$ we get $\mathcal{L}_1 \le \mathcal{L}_2$.
Therefore $\mathcal{L}_1 = \mathcal{L}_2$. Thus $Y_1 = E/\mathcal{L}_1 = E/\mathcal{L}_2 = Y_2$. This completes the proof of Lemma \ref{equl}.
\end{proof}
\begin{lemma}\label{isomm}
Let $Y_1$ and $Y_2$ be two toroidal maps with associated matrix $M_1$ and $M_2$ respectively. Then $Y_1 \simeq Y_2$ if and only if  there exists $A \in G_0$ and $B\in GL(2,\ZZ)$ such that $M_1 = AM_2B$ where $G_0$ is group of rotations and reflections fixing the origin in $E$ where $E$ is the tiling of $\mathbb{R}^2$ of type same as type of $Y_i$ for $i=1,2$. 
\end{lemma}
\begin{proof}
Let $Y_1 \simeq Y_2$ and $\alpha : Y_1 \to Y_2$ be an isomprphism. Now, $\alpha$ can be extended to an automorphism of the covering plane $E$, call that extension be $\widetilde{\alpha}$. 
Clearly $\widetilde{\alpha}$ will take fundamental parallelogram of $Y_1$ to that of $Y_2$. Now, the only ways to transform one fundamental region to another are rotation, reflection and change of basis of $E$. Multiplication by an element of $GL(2,\ZZ) $ will take care of base change. Rotation, reflection or their composition will take care by multiplication by $A\in G_0$. Hence, we get $M_1 = AM_2B$.\\
conversely, let $M_1 = AM_2B$. $A \in G_0$ so the combinatorial type of the torus associated to the matrix $AM_2$ and $M_2$ are same. Geometrically multiplying by elements of $GL(2,\ZZ)$ corresponds to modifying the fundamental domain by changing the basis. Hence, this will not change the combinatorial type of the torus. Thus $Y_1 \simeq Y_2$. This completes the proof of Lemma \ref{isomm}.
\end{proof}
Now, define a relation on $\mathcal{S}$ by $A\sim B \iff A= BU$ for some unimodular matrix $U$. Clearly this is an equivalence relation. Consider $\mathcal{S}' = \mathcal{S}/\sim $. So by Lemma \ref{equl} we can conclude that there are $\#\mathcal{S}'$ many distinct $n$-sheeted cover of $X$ exist. Let's find this cardinality.
Now, for every $m\times n$ matrix $A$ with integer entries has an unique $m \times n$ matrix $H$, called hermite normal form of $A$, such that $H = AU$ for some unimodular matrix $U$. All elements of an equivalence class of $\mathcal{S}'$ has  same hermite normal form and we take this matrix in hermite normal form as representative of that equivalence class. Thus to find cardinality of $\mathcal{S}'$ it is enough to find number of distinct matrices $M$ which are in hermite normal form and has determinant $n$. We do not take the matrices with determinant $-n$ because by multiplying by the unimodular matrix  
$\begin{bmatrix} 0 & 1\\1 &0 \end{bmatrix}$ changes sign of the determinant.
As $M$ is in lower triangular form so take $M = \begin{bmatrix} a & 0\\b &d \end{bmatrix}$.
Then $det(M) = ad = n \implies a= n/d.$ By definition of hermite normal form $b\geq0$ and $b<d$ so $b$ has $d$ choices for each $d|n.$ Hence, there are precisely $\sigma(n):=\sum_{d|n}d$ many distinct $M$ possible. Thus $\#\mathcal{S}' = \sigma(n).$

Therefore there are $\sigma(n)$ many distinct $n$-sheeted cover exists for a given toroidal map.
However depending on the group $G_0$ we are counting isomorphic maps more than once in the collection of distinct $n$-sheeted covers. Let $D_n$ denotes the symmetry group of a regular $n$-gon.
Then $G_0 = D_6$ for tilings of the plane of type $[3^6], [6^3],$\\ $[3^1, 4^1, 6^1, 4^1], [3^1, 12^2], [4^1, 6^1,12^1], [3^1, 6^1, 3^1, 6^1]$, $G_0 = \mathbb{Z}/6\mathbb{Z}$ for tilings of the plane of type $[3^4,6^1]$, $G_0 = D_4$ for tilings of the plane of type $[4^4], [3^2, 4^1, 3^1, 4^1], [4^1, 8^2]$ and $G_0 = \mathbb{Z}/2\mathbb{Z}$ for tilings of the plane of type $[3^3, 4^2]$.
For a map $X$ let us denote $\mathcal{S}_X$ be the stabilizer of a vertex $v$ of $X$.
$\mathcal{S}_X := \{\phi \in {\rm Aut}(X) \mid \phi(v) = v \}.$
Then $\mathcal{S}_X \leq G_0$ for corresponding tiling.
For $G_0 = D_6$ possible orders of $\mathcal{S}_X$ are $2,4,6 ~{\rm and}~ 12$. Order $1$ is not possible because point reflection is always present in ${\rm Aut}(X)$ and it has order $2$. 
If $\mathcal{S}_X$ has order $2$ it is counted $6$ times (just by $60$ degree rotations). If $\mathcal{S}_X$ has order $4$ then it is counted $3$ times. If $\mathcal{S}_X$ has order $6$ it is counted $2$ times and if $\mathcal{S}_X$ has order $12$ then it is counted only once.
In general, if $\mathcal{S}_X$ has order $d$ then $X$ is counted $12/d$ times. This is caused because the choice of basis of the map is not unique, by applying different symmetries if the tiling on the basis we get other representation of the same map.
Similarly, for those tilings which have stabilizer $D_4$, the stabilizer of a corresponding map $X$ will be subgroup of $D_4$. So possible orders of $\mathcal{S}_X$ will be $2,4 ~{\rm and}~ 8$. If $\mathcal{S}_X$ has order $d$ then it is counted $8/d$ times.
And for those tilings which have stabilizer $\mathbb{Z}/4\mathbb{Z}$ and $\mathbb{Z}/6\mathbb{Z}$, the stabilizer of a corresponding map $X$ will be subgroup of $\mathbb{Z}/4\mathbb{Z}$ and $\mathbb{Z}/6\mathbb{Z}$ respectively. So possible orders of $\mathcal{S}_X$ will be $2, 4$ or $2,6$. If $\mathcal{S}_X$ has order $d$ then it is counted $4/d$ or $6/d$ times respectively. For the latter case order of $\mathcal{S}_X$ cannot be $3$ since in that case the group will be generated by $120^{\circ}$ rotation and presence of $120^{\circ}$ rotation will ensure the presence of $60^{\circ}$ rotation. A function  $f \colon \mathbb{N} \to \mathbb{N}$ is called {\em multiplicative} if for any two co-prime positive integers $m$ and $n$, $f(mn)=f(m)f(n)$.
Now, we are ready to calculate total number of $n$-sheeted covers of a given map.

\begin{proof}[Proof of Theorem \ref{thm-main3}]
Let $X$ be map of type $Z$ and $Y$ be an $n$-orbital cover of $X$.
suppose $Z=[3^4,6^1]$. The stabilizer group of this tiling is generated by $60^\circ$ rotation. Therefore for a map $X$, $\mathcal{S}_X$ has orders either 2 or 6. If $\mathcal{S}_X$  has order 2 then we are counting it thrice and if it has order $6$ then we are counting it only once in the collection of distinct $n$-sheeted covers.
Let $\mathcal{M}_i(n)$ denotes number of $n$-sheeted covers up to isomorphism with order of the stabilizer $i$ for $i=2$ and $6$. Then $3\mathcal{M}_2(n) + \mathcal{M}_6(n) = \sigma(n)$. Now, $\mathcal{M}_6(n)$ is also number of $n$-sheeted covers having $60^\circ$ rotation in the stabilizer. For this counting we have the following claim. First define,
$$\rho_{i,j}(n) := {\rm Number~of~solutions~of ~} x^2+ix+j\equiv 0 \pmod n.$$
\begin{claim}\label{cm-1}
If $X$ be an $n$-sheeted toroidal cover of a given map $X_0$ of type $[3^4,6^1]$, then the total number of maps having $60^\circ$ rotation in the automorphism group of $X$ is equal to $$f_1(n)=\left\{
	\begin{array}{ll}
		0  & \mbox{, if } m_j\equiv1\pmod2 \mbox{ for some } j\in\{0,1,2,\dots,n_2\} \\
		\prod_{i=1}^{n_1}(k_i+1) & \mbox{, otherwise.}
	\end{array}
\right.$$
where $n=2^{m_0} \cdot 3^{k_0} \cdot \prod_{i=1}^{n_1}p_i^{k_i} \cdot \prod_{j=1}^{n_2}q_j^{m_j}$ with $p_i$ and $q_j$ are primes such that $p_i\equiv1\pmod3$ for $i\in \{0,1,\dots,n_1\}$ and $q_j\equiv2\pmod3$ for $j\in \{0,1,\dots,n_2\}$.
\end{claim}
\noindent\emph{Proof of Claim \ref{cm-1} :}
Let $X$ be an $n$-sheeted toroidal cover of $X_0$ of type $[3^4,6^1]$ with associated matrix $\begin{bmatrix} a & 0\\ b & d\end{bmatrix}$. We have taken this matrix representation with respect to basis vectors of $X_0$. So, $ad=n$. Then $X=\frac{E}{\mathcal{K}}$ with $\mathcal{K}=\langle w_1,\,w_2\rangle$ where $w_1:z\mapsto z+(aA_5+bB_5)$ and $w_2: z\mapsto z+dB_5$ (See Fig. \ref{fig:Archi}). 
Let $\rho$ denotes the map obtained by taking $60^{\circ}$ rotation on $E_5$.
$\rho$ sends $A_5$ to $B_5$ and $B_5$ to $B_5-A_5$.

We look for the conditions on $a,\,b$ and $d$ so that $\rho\in{\rm Nor}(\mathcal{K})$. For that, it is enough to check if $\rho^{-1}w_1\rho$, $\rho^{-1}w_2\rho$ belong $\mathcal{K}$. 
We know that conjugation of a translation by a rotation is again a translation by the rotated vector. So, $\rho^{-1}w_i\rho$  is translation by the vector $\rho \circ w_i(0).$  Now, $\rho \circ w_1(0)=\rho(aA_5+bB_5)=a\rho(A_5)+b\rho(B_5)=aB_5+b(B_5-A_5)=-bA_5+(a+b)B_5$ and  $\rho \circ w_2(0)=\rho(cA_5+dB_5)=c\rho(A_5)+d\rho(B_5)=cB_5+d(B_5-A_5)=-dA_5+(c+d)B_5$. Now, $\rho(w_1)$ and $\rho(w_2)$ belong to the lattice of $\mathcal{K}$ provided that there exists integers $m_1,\,m_2,\,m_3,\, m_4$ such that $-bA_5+(a+b)B_5=m_1(aA_5+bB_5)+m_2dB_5$ and $-dA_5+dB_5=m_3(aA_5+bB_5)+m_4dB_5.$ Since, $A_5$ and $B_5$ are linearly independent, we have a system of linear equations, $m_1a=-b$, $m_1b+m_2d=a+b$, $m_3a=-d$ and $m_3b+m_4d=d$. Solving these equations, we get, $m_1=-\frac{b}{a}$, $m_2=\frac{a^2+ab+b^2}{ad}$, $m_3=-\frac{d}{a}$ and $m_4=\frac{a+b}{a}$. Since $m_1,\,m_2,\,m_3$ and $m_4$ are integers, we must have $a\mid b$, $a\mid d$ and $ad\mid(a^2+ab+b^2)$. Therefore, $\rho \in {\rm Aut}(X)$ if and only if {\rm(i)} $a\mid b$, {\rm(ii)} $a\mid d$ and {\rm(iii)} $ad\mid(a^2+ab+b^2)$.

Now, $a\mid b$ and $a \mid d$ implies $b=ax$ and $d=ay$ where $0\leq x<y$. Using the last condition, we get $a.ay\mid (a^2+a.ax+ax.ax)$, or, $y\mid (1+x+x^2)$ where $y=\frac{d^2}{n}$. This implies $ 1+x+x^2$ has a solution in $\mathbb{Z}_{\frac{d^2}{n}}$. So, the total number of maps is given by the number of solutions of the polynomial $1+x+x^2$ in $\mathbb{Z}_{\frac{d^2}{n}}$ satisfying $n\mid d^2$ for every divisor $d$ of $n$. Hence, we have 
$$f_1(n):=\sum_{\substack{d \mid n \\n \mid d^2}}\rho_{1,1}\left(\frac{d^2}{n}\right).$$
from \cite[Theorem-122, Chapter 8]{HW1979} we have the following,
\begin{result}\label{result--1}
The number of roots of $f(x) \equiv 0 \pmod n$ is the product of the number of roots of separate congruences $f(x) \equiv 0 \pmod{p_i^{t_i}}$ for $i=1,2,3,\dots,k$.
\end{result}
From the above result we have $\rho_{1,1}$ is a multiplicative function. As a consequence of this we have $f_1$ is also multiplicative. Thus it is enough to find values of $f_1$ for prime powers.

$f_1(p^k)=\sum_{\substack{d \mid p^k \\p^k \mid d^2}}\rho_{1,1}\left(\frac{d^2}{p^k}\right) = 
\sum_{\substack{p^l \mid p^k \\p^k \mid p^{2l}}}\rho_{1,1}\left(\frac{p^{2l}}{p^k}\right)=
\sum_{\frac{k}{2}\le l \le k}\rho_{1,1}\left(p^{2l-k}\right)=
\sum_{l=\lceil \frac{k}{2} \rceil}^k\rho_{1,1}\left(p^{2l-k}\right)$
Therefore, we have for all prime $p$,
$$f_1(p^k) =\left\{
	\begin{array}{ll}
		\sum_{l= \frac{k}{2} }^k\rho_{1,1}\left(p^{2l-k}\right)  & \mbox{, if  k is even} \\
		\sum_{l= \frac{k+1}{2} }^k\rho_{1,1}\left(p^{2l-k}\right) & \mbox{, if  k is odd}
	\end{array}
          \right.$$

From theory of quadratic residues \cite{HW1979} we have $1+x+x^2 \equiv 0 \pmod {p^i}$ has 2 distinct solutions when $p \equiv 1 \pmod  3$ provided $i>0$. If $p \equiv 2 \pmod 3$ then $1+x+x^2 \equiv 0 \pmod {p^i}$ has no solution provided $i>0$. If $i=0$ then it has unique solution in both the cases.
Putting these values in the above formula we have,
$$f_1(p^k) =\left\{
	\begin{array}{lll}
		k+1  &, p \equiv 1 \pmod  3 \\
		0 &, p \equiv 5 \pmod  3 ~{\rm and}~ 2 \nmid k \\
		1 &, p \equiv 5 \pmod  3 ~{\rm and}~ 2 \mid k
	\end{array}
          \right.$$
          
Now,
observe that $\rho_{1,1}(2^k)=0$ for all $k\in \mathbb{N}$ since $1+x+x^2$ is odd for every $x\in \mathbb{N}\cup\{0\}$ and $2^k\nmid (1+x+x^2)$.
$$f_1(2^k) =\left\{
	\begin{array}{ll}
		\sum_{l= \frac{k}{2} }^k\rho_{1,1}\left(2^{2l-k}\right) = 1 & \mbox{, if  k is even} \\
		\sum_{l= \frac{k+1}{2} }^k\rho_{1,1}\left(2^{2l-k}\right) = 0  & \mbox{, if  k is odd}
	\end{array}
          \right. $$
$f_1(3^k)=1$ for all $k\in\mathbb{N}$.
$\rho_{1,1}(3)=1$ since $1$ is the only solution of the equation $1+x+x^2$ in $\mathbb{Z}_3$.
We want to show that $\rho_{1,1}(3^k)=0$ for all $k\geq 2$. When $x\equiv 0 \pmod 3$ then $1+x+x^2\equiv 1 \pmod 3$. When $x=3l+1$ where $l\in\mathbb{N}$, $1+x+x^2=3(1+3l+3l^2)$. When $x\equiv 2 \pmod 3$ then $1+x+x^2\equiv 1 \pmod 3$. So, $3^k\nmid(1+x+x^2)$ for all $x\in \mathbb{N}$. Hence, $\rho_{1,1}(3^k)=0$.
Therefore, 
$$f_1(3^k) =\left\{
	\begin{array}{ll}
		\sum_{l= \frac{k}{2} }^k\rho_{1,1}\left(3^{2l-k}\right) = 1 & \mbox{, if  k is even} \\
		\sum_{l= \frac{k+1}{2} }^k\rho_{1,1}\left(3^{2l-k}\right) = 1  & \mbox{, if  k is odd}
	\end{array}
          \right. $$

Now, by fundamental theorem of arithmetic any integer $n$ can be written as  $n=2^{m_0} \cdot 3^{k_0} \cdot \prod_{i=1}^{n_1}p_i^{k_i} \cdot \prod_{j=1}^{n_2}q_j^{m_j}$ with $p_i$ and $q_j$ are primes such that $p_i\equiv1\pmod3$ for $i\in \{0,1,\dots,n_1\}$ and $q_j\equiv2\pmod3$ for $j\in \{0,1,\dots,n_2\}$, using multiplicativity of $f $ we get,
$$f_1(n):=\left\{
	\begin{array}{ll}
		0  & \mbox{, if } m_j\equiv1\pmod2 \mbox{ for some } j\in\{0,1,2,\dots,n_2\} \\
		\prod_{i=1}^{n_1}(k_i+1) & \mbox{, otherwise,}
	\end{array}
      \right. $$
This completes the proof of Claim \ref{cm-1}.

The same conclusion will be true for maps of type $[3^1,4^1,6^1,4^1],~[3^1,12^2],~[3^1,6^1,3^1,6^1]$ and $[4^1,6^1,12^1]$.
\smallskip

Hence, $\mathcal{M}_6(n) = f_1(n)$. Therefore, $\mathcal{M}_2(n) = \frac{\sigma(n)-f_1(n)}{3}$.
Thus total number of $n$-sheeted covers up to isomorphism is $\mathcal{M}_2(n)+\mathcal{M}_6(n)=\frac{\sigma(n)+2f_1(n)}{3}$.

Next, suppose $Z=[3^1,4^1,6^1,4^1]$ or $[3^1,6^1,3^1,6^1]$ or $[4^1,6^1,12^1]$ or $[3^1,12^2]$. The stabilizer group for these tilings are $D_6$. Therefore, for a map $X$, order of $\mathcal{S}_X \in \{2,4,6,12\}$.   Let $\mathcal{N}_i(n)$ denotes number of $n$-sheeted covers up to isomorphism with order of the stabilizer $i$ for $i=2,4,6$ and 12 respectively. Then,
\begin{equation}\label{eqn}
    6\mathcal{N}_2(n) + 3\mathcal{N}_4(n) + 2\mathcal{N}_6(n) + \mathcal{N}_{12}(n) = \sigma(n).
\end{equation}
To get explicit expression of $\mathcal{N}_i(n)$ we are making following claim.
Let $R_1, R_2, R_3$ denotes the line which bisects the angle between $OA$ and $OB$, $OB$ and $OF$, $OF$ and $O(-A)$ where $F=B-A$ and $A=A_j, B=B_j$ for $j=3,4,7$ or $6$. Let $r_i$ denoted the reflection about $R_i$ for $i=1,2,3$ respectively.  
\begin{claim}\label{cm--3}
If $X$ be a $n$-sheeted toroidal cover of a given map $X_0$, then the total number of maps having $r_i$, for $i=1,2,3$, in the automorphism group of $X$ is equal to $$f_3(n)=\left\{
	\begin{array}{ll}
		\prod_{i=1}^{n_1}(k_i+1)  & \mbox{, if } k_0=0 \\
		(2k_0-1)\prod_{i=1}^{n_1}(k_i+1) & \mbox{, otherwise.}
	\end{array}
\right.$$ 
where $n=2^{k_0}\cdot \prod_{i=1}^{n_1}p_i^{k_i}$ such that $p_i$ is any odd prime for $i\in \{0,1,\dots,n_1\}$.
\end{claim}
\noindent\emph{Proof of Claim \ref{cm--3} :}
Let $X, \mathcal{K}, w_1, w_2$ be as in proof of Claim \ref{cm-1}.



We know that conjugation of a translation by a reflection is again a translation by the reflected vector. So, $r_1^{-1}\circ w_i\circ r_1$ is translation by $r_1 \circ w_i(0)$. Now, $=r_1\circ w_1(0)=r_1(aA+bB)=ar_1(A)+br_1(B)=aB+b(A-B)=bA+(a-b)B$ and $r_1\circ w_2(0)=r_1(dB)=dr_1(B)=d(A-B)=dA-dB$. By applying similar method used in Claim \ref{cm-1}, we get $r_1 \circ w_1,\,r_1 \circ w_2\in \mathbb{Z}(aA+bB)+\mathbb{Z}dB$ if and only if $a\mid b$, $a\mid d$ and $ad\mid (b^2+2ab)$. Thus, $r_1\in$ {\rm Nor}$(\mathcal{K})$ if and only if {\rm (i)} $a\mid b$, {\rm (ii)} $a\mid d$ and {\rm (iii)} $ad\mid (b^2+2ab)$.

Using (i) and (ii) we get $b=ax$ and $d=ay$ where $0\leq x\leq y$ and using (iii), we get $y\mid (x^2+2x)$ where $y=\frac{d^2}{n}$. Hence, the total number of maps having $r_1 \in $ Aut($X$) is given by the number of solutions of the polynomial $x^2+2x$ in $\mathbb{Z}_{\frac{d^2}{n}}$ satisfying $n\mid d^2$ for every divisor $d$ of $n$. 
Hence, we get,
$$f_3(n)=\sum_{\substack{d \mid n \\n \mid d^2}}\rho_{2,0}\left(\frac{d^2}{n}\right).$$
By similar reasons $f_3$ is also multiplicative and we have 
$$f_3(p^k) =\left\{
	\begin{array}{ll}
		\sum_{l= \frac{k}{2} }^k\rho_{2,0}\left(p^{2l-k}\right)  & \mbox{, if  k is even} \\
		\sum_{l= \frac{k+1}{2} }^k\rho_{2,0}\left(p^{2l-k}\right) & \mbox{, if  k is odd}
	\end{array}
          \right.$$
Observe that,
$\rho_{2,0}(1)=1$ since $x=0$ is the solution for $x^2+2x\equiv0\pmod1$.\\
$\rho_{2,0}(2)=1$ since $0$ is the only solution of $0$ in $\mathbb{Z}_2$.
$\rho_{2,0}(4)=2$ since $0$ and $2$ are the solutions of $x^2+2x=0$ in $\mathbb{Z}_4$.
Now, $x^2+2x\equiv 0\pmod{2^k}\implies (x+1)^2\equiv1\pmod{2^k}$. Putting $y=x+1$, we get the congruence $y^2\equiv1\pmod{2^k}$. We know that the congruence $y^2\equiv1\pmod{2^k}$ has exactly four incongruent solutions. Hence, $\rho_{2,0}(2^k)=4$.
Therefore,
$$f_3(2^k) =\left\{
	\begin{array}{ll}
		\sum_{l= \frac{k}{2} }^k\rho_{2,0}\left(2^{2l-k}\right) = 1+2+4+\dots+4=4(\frac{k}{2}-1)+3=2k-1  & \mbox{, if  k is even} \\
		\sum_{l= \frac{k+1}{2} }^k\rho_{2,0}\left(2^{2l-k}\right)=1+4+4+\dots+4=4(\frac{k-1}{2})+1=2k-1 & \mbox{, if  k is odd}
	\end{array}
          \right.$$

The congruence $x^2+2x\equiv0\pmod{p}$ has solution if and only if $y^2\equiv4\pmod{p}$ has a solution. Since, $p$ is odd we have $\left(\frac{4}{p}\right)=1$ that is 4 is a quadratic residue $\mod p$. Hence, $x^2+2x\equiv0\pmod{p}$ has exactly two solutions for all odd prime $p$. This implies $x^2+2x\equiv0\pmod{p^k}$ has $2$ solutions for all $k\geq1$. So, $\rho(p^k)=2$ for all $k\geq1$.
$$f_3(p^k) =\left\{
	\begin{array}{ll}
		\sum_{l= \frac{k}{2} }^k\rho_{2,0}\left(p^{2l-k}\right) = 1+2+2+\dots+2=2(\frac{k}{2})+1=k+1  & \mbox{, if  k is even} \\
		\sum_{l= \frac{k+1}{2} }^k\rho_{2,0}\left(p^{2l-k}\right)=2+2+\dots+2=4(\frac{k+1}{2})=k+1 & \mbox{, if  k is odd}
	\end{array}
          \right.$$
Therefore by multiplicative property of $f_3$ we have,
$$f_3(n):=\left\{
	 \begin{array}{ll}
		\prod_{i=1}^{n_1}(k_i+1)  & \mbox{, if } k_0=0 \\
		(2k_0-1)\prod_{i=1}^{n_1}(k_i+1) & \mbox{, otherwise.}
	 \end{array}
     \right.$$
where $n=2^{k_0}\cdot \prod_{i=1}^{n_1}p_i^{k_i}$ such that $p_i$ is any odd prime for $i\in \{0,1,\dots,n_1\}$. Similarly for $r_2$ and $r_3$ we will get the same function. This completes the proof of Claim \ref{cm--3}.
\smallskip

Let number of distinct $n$-sheeted cover $Y$ such that $\mathcal{S}_Y=\mathcal{S}_{E_3}$ is given by $A(n)$. Then $\mathcal{N}_{12}(n) = A(n)$, since we are counting these maps only once
in the collection of all distinct $n$-sheeted covers.
Let $D_i(n)$ denotes number of distinct $n$-sheeted covers with stabilizer is generated by $r_i$ and the point reflection. Then $D_i(n) = f_3(n)-A(n)$ for $i=1,2,3$. Let $D_4(n)$ denotes the number of distinct $n$-covers with stabilizer generated by $60^\circ$ rotation. Then, $D_4(n) = f_1(n)-A(n)$.
Therefore, $\mathcal{N}_6(n)=D_4(n)/2$ and $\mathcal{N}_4(n)=(D_1(n)+D_2(n)+D_3(n))/3$. Now, $\mathcal{N}_2(n)$ can be determined uniquely from equation (\ref{eqn}).
Hence, total number of $n$-sheeted covers is given by $\mathcal{N}_2(n)+\mathcal{N}_4(n)+\mathcal{N}_6(n)+\mathcal{N}_{12}(n) = \frac{1}{6}(\sigma(n)+2f_1(n)+3f_3(n))$.

Suppose $Z= [3^2,4^1,3^1,4^1]$. Let $L$ denotes the line passing through $O$ and $A_1$ in $E_1$ (see Fig. \ref{fig:Archi}) and $R$ is the reflection about $L$. Let $T$ denoted the translation of $E_1$ which maps $O$ to the mid point of the line joining $u_{0,1}$ and $u_{-1,2}$. Then the glide reflection $g := T\circ R$ is a symmetry of $E_1$. Note that the translation $T$ and the reflection $R$ are not symmetries of the tiling. Let $\phi$ be the function obtained by rotating $E_1$ by 90 degree with respect to origin. There are two more reflection present in the tiling. One is reflection about the line joining $u_{1,-1}$ and $u_{0,0}$. Other is reflection about the line joining $u_{-1,0}$ and $u_{0,1}$. But presence of latter two reflection in the automorphism group of a map does not effect to the number of orbits. Thus we do not consider these two reflections for counting number of covers up to isomorphism.
Observe that if $g$ or $\phi$ present in Aut($Y$) then $Y$ is one orbital otherwise it is 2-orbital. To count number of covers up to isomorphism we make following claims.

\begin{claim}\label{cm---2}
If $X$ be an $n$-sheeted toroidal cover of a given map $X_0$, then the total number of maps having $\phi$ in the automorphism group of X is equal to 
\[f_2(n)=\left\{
	\begin{array}{ll}
		0  & \mbox{, if } m_j\equiv1\pmod2 \mbox{ for some } j\in\{0,1,2,\dots,n_2\} \\
		\prod_{i=1}^{n_1}(k_i+1) & \mbox{, otherwise.}
	\end{array}
\right.\] 
where $n=2^{m_0} \cdot 3^{k_0} \cdot \prod_{i=1}^{n_1}p_i^{k_i} \cdot \prod_{j=1}^{n_2}q_j^{m_j}$ such that $p_i$ and $q_j$ are primes with $p_i\equiv1\pmod4$ for $i\in \{0,1,\dots,n_1\}$ and $q_j\equiv3\pmod4$ for $j\in \{0,1,\dots,n_2\}$.
\end{claim}

\noindent\emph{Proof of Claim \ref{cm---2} :}
Let $X, \mathcal{K},w_1,w_2$ be as in Claim \ref{cm-1} but of type $[3^2,4^1,3^1,4^1]$. Now, $\phi$ sends $A_1$ to $B_1$ and $B_1$ to $-A_1$ (See Fig. \ref{fig:Archi}). 
Here, $\phi^{-1}w_1\phi$ and $\phi^{-1}w_2\phi$ are translation by vectors $\phi \circ w_1(0)$ and $\phi \circ w_2(0)$ respectively. Now, $\phi \circ w_1(0)=\phi(aA_1+bB_1)=a\phi(A_1)+b\phi(B_1)=aB_1+b(-A_1)=-bA_1+aB_1$ and $\phi \circ w_2(0)=\phi(dB_1)=d\phi(B_1)=d(-A_1)=-dA_1$.  These two vectors belong to the lattice of $\mathcal{K}$ if and only if 
$a\mid b$, $a\mid d$ and $ad\mid(a^2+b^2)$.\\
Now, $a\mid b$ and $a \mid d$ implies $b=ax$ and $d=ay$ where $0\leq x<y$. Using the last condition, we get $a.ay\mid (a^2+ax.ax)$, or, $y\mid (1+x^2)$ where $y=\frac{d^2}{n}$. This implies $ 1+x^2$ has a solution in $\mathbb{Z}_{d^2/n}$. Proceeding similarly as in Claim \ref{cm-1} we get, 
$$f_2(n):=\sum_{\substack{d \mid n \\n \mid d^2}}\rho_{0,1}\left(\frac{d^2}{n}\right).$$
Similarly, as Claim \ref{cm-1} $f_2$ will be multiplicative as well and we have 
$$f_2(p^k) =\left\{
	\begin{array}{ll}
		\sum_{l= \frac{k}{2} }^k\rho_{0,1}\left(p^{2l-k}\right)  & \mbox{, if  k is even} \\
		\sum_{l= \frac{k+1}{2} }^k\rho_{0,1}\left(p^{2l-k}\right) & \mbox{, if  k is odd}
	\end{array}
          \right.$$

$\rho_{0,1}(1)=1$ since $x=0$ is the solution for $1+x^2\equiv0\pmod1$.
$\rho_{0,1}(2)=1$ since $1$ is the only solution of $1+x^2=0$ in $\mathbb{Z}_2$.
Now, $\rho_{0,1}(2^k)=0$ for all $k\in \mathbb{N}$ with $k\geq 2$. This is because when $x=2l$ for some $l\in\mathbb{N}$, $1+x^2=1+4l^2$. So, $2^k\nmid 1+x^2$ for all $k$. 
Let $n=2^k$. Then, $d=2^i$, $0\leq i\leq k$.
$$f_2(2^k) =\left\{
	\begin{array}{ll}
		\sum_{l= \frac{k}{2} }^k\rho_{0,1}\left(2^{2l-k}\right) = 1  & \mbox{, if  k is even} \\
		\sum_{l= \frac{k+1}{2} }^k\rho_{0,1}\left(2^{2l-k}\right)=1 & \mbox{, if  k is odd}
	\end{array}
          \right.$$
When $x=3l$ where $l\in\mathbb{N}\cup \{0\}$, $+x^2=1+9l^2$. When $x=3l+1$ where $l\in\mathbb{N}$, $1+x^2=9k^2+6k+2$. When $x=3l+2$ where $l\in\mathbb{N}$, $1+x^2=9k^2+12k+5$. So, $3^k\nmid(1+x^2)$ for all $x\in \mathbb{N}$. Hence, $\rho_{0,1}(3^k)=0$.
Let $n=3^k$. Then, $d=3^i$, $0\leq i\leq k$.
$$f_2(3^k)=
\left\{
	\begin{array}{ll}
		\sum_{l= \frac{k}{2} }^k\rho_{0,1}\left(2^{2l-k}\right)=1 & \mbox{, if  k is even} \\
		\sum_{l= \frac{k+1}{2} }^k\rho_{0,1}\left(2^{2l-k}\right)=0 & \mbox{, if  k is odd}
	\end{array}
\right.$$

If $p$ is any odd prime except $3$, then, $x^2+1 \equiv 0\pmod p$ has a solution if and only if $y^2\equiv -4\pmod p$ has a solution. 
Using Legendre symbols \cite[Chap. 9]{burton2018}, $\left(\frac{-4}{p}\right)=\left(\frac{-1}{p}\right)\left(\frac{4}{p}\right)=\left(\frac{-1}{p}\right)=\left\{
	\begin{array}{ll}
		1  & \mbox{if } p\equiv1\pmod4 \\
		-1 & \mbox{if } p\equiv3\pmod4
	\end{array}
\right.$.
Hence, we have $\rho_{0,1}(p)=
\left\{
	\begin{array}{ll}
		2  & \mbox{if } p\equiv1\pmod4 \\
		0 & \mbox{if } p\equiv3\pmod4
	\end{array}\right.$.
Putting these values in the expression of $f_2(p^k)$ we get,
$$ f_2(p^k)=
\left \{
	\begin{array}{lll}
	    k+1 & if p\equiv 1\pmod 4\\
		1  & \mbox{if } k\equiv 0\pmod2 ~\mbox{and}~ p\equiv 3\pmod 4 \\
		0  & \mbox{if } k\equiv 1\pmod2 ~\mbox{and}~ p\equiv 3\pmod 4
	\end{array}
\right .$$

Let $n=2^{k_0}3^{m_0}p_1^{k_1}p_2^{k_2}\dots p_{n_1}^{k_{n_1}}q_1^{m_1}q_2^{m_2}\dots q_{n_2}^{m_{n_2}}$ where $p_i$ and $q_j$ are primes with $p_i\equiv1\pmod4$ where $i\in \{0,1,\dots,n_1\}$ and $q_j\equiv3\pmod4$ where $j\in \{0,1,\dots,n_2\}$.\\
Then using multiplicavity of $f_2$ we get,
\[f_2(n)=\left\{
	\begin{array}{ll}
		0  & \mbox{, if } m_j\equiv1\pmod2 \mbox{ for some } j\in\{0,1,2,\dots,n_2\} \\
		\prod_{i=1}^{n_1}(k_i+1) & \mbox{, otherwise.}
	\end{array}
\right.\] 
This completes the proof of Claim \ref{cm---2}.

\begin{claim}\label{glide}
Let $X$ be a $n$-sheeted toroidal cover of a given map of type $[3^2,4^1,3^1,4^1]$. Then number of distinct $n$-sheeted covers having glide in its automorphism group is 
$$g(n) := \sum_{d \mid n,~2 \mid d}2 + \sum_{d \mid n,~2 \nmid d}1.$$
\end{claim}

\noindent\emph{Proof of Claim \ref{glide} :}
Corresponding matrix for glide reflection is $\begin{bmatrix}
-1 & 0 \\ 0 & 1 \end{bmatrix}$. A map represented by $\begin{bmatrix}
a&0\\b&d
\end{bmatrix}$ contains $g$ in its automorphism group if and only if $d \mid 2b$. Since, $0\leq b <d$ if $d$ is even then there are only 2 possible values of $b$, they are $b=0$ and $\frac{d}{2}$. If $d$ is odd then 0 is only possible value of $b$. Hence, number of such matrix is $\sum_{\substack{d \mid n \\2 \mid d}}2 + \sum_{\substack{d \mid n \\2 \nmid d}}1$. 

\begin{claim}\label{clm-h}
Let $X$ be a $n$-sheeted toroidal cover of a given map of type $[3^2,4^1,3^1,4^1]$ having $\phi$ and glide reflection in its stabilizer. Then number of such distinct maps will be 
$$h(n):=\sum_{\substack{d \mid n \\n \mid d^2}}\rho_7\left(\frac{d^2}{n}\right).$$
\end{claim}

\noindent\emph{Proof of Claim \ref{clm-h} :}
$\phi$ and glide both will present in the automorphism group of a map represented by $\begin{bmatrix}
a&0\\b&d
\end{bmatrix}$ if and only if $a\mid b$, $a\mid d$, $ad\mid a^2+b^2$ and $d \mid 2b$. Let
$\rho_7(n):=\#\{x\in \mathbb{Z}_n:\, x^2+1=0\,{\rm and}\,2x=0\}$. Therefore, $h(n):=\sum_{\substack{d \mid n \\n \mid d^2}}\rho_7\left(\frac{d^2}{n}\right).$ 

\begin{claim}\label{alpha}
Let $X$ be a $n$-sheeted toroidal cover of a given map of type $[3^2,4^1,3^1,4^1]$  such that after applying glide reflection and $90^{\circ}$ rotation on $X$ we get same map and Aut($X$) does not contain 90$^{\circ}$ or glide. Then number of distinct such maps are,
$$\alpha(n) = f_3(n)-h(n).$$
\end{claim}
\noindent\emph{Proof of Claim \ref{alpha} :} 
Let $X$ be represented by the matrix $M=\begin{bmatrix}
a&0\\b&d
\end{bmatrix}$. Let matrix of $90^{\circ}$ is denoted by $P$ and that of glide is $Q$. Then $P=\begin{bmatrix}
0&-1\\1&0
\end{bmatrix}$ and $Q=\begin{bmatrix}
-1&0\\0&1
\end{bmatrix}$. By hypothesis hermite normal form of $PM=$ hermite normal form of $QM$. Therefore there exists an unimodular matrix $U=\begin{bmatrix}
p&q\\r&s
\end{bmatrix}$ such that $PM=QMU$. Putting expressions of $P, Q, U$ and comparing matrix entries we get $ap=b,\, aq=d,\, bp+rd=a,\, bq+sd=0$.

If $p=0$ then $b=0,\,rd=a,\, sd=0$. Since $d\neq 0$ so $s=0$. $a$ and $d$ are both positive so $rd=a \implies r>0$. Since, $U$ is unimodular so we have $rq=\pm 1$. Therefore, $r=1$. Hence, $a=d$. Therefore, $M=\begin{bmatrix}
a&0\\0&a
\end{bmatrix}$ which contradicts the fact that $M$ does not have $90^{\circ}$ rotation in its automorphism group. So, $p\neq 0$. Since $a\neq 0$ so $b\neq 0$. $bp+rd=a \implies 1-p^2=rq$ and $bq+sd=0 \implies p(p+s)=0 \implies s=-p$. For a fixed $d$  if we know $p$ then we can derive values of other unknowns. So number of such possible maps is given by $\sum_{\substack{d\mid n\\n \mid d^2}}[{\rm Number~ of ~solutions~ of} x^2-1\equiv 0 \pmod{\frac{d^2}{n}}]$(putting $q=d^2/n$ as $a=n/d)$. We will see in Claim \ref{cm-4}, this quantity is equals to $f_3(n)$. Since we are considering those maps which does not have $90^{\circ}$ and glide in its automorphism group so to get required maps we have to subtract $h(n)$ from above. 
Hence, $\alpha(n) = f_3(n)-h(n).$ This completes the proof of Claim \ref{alpha}.

\smallskip

Here, a map $Y$ will be $1$ orbital if and only if $\phi$ or glide reflection present in Aut$(Y)$ So whenever nither $\phi$ nor glide present Aut($Y$), $Y$ will be $2$-orbital.
Now, if both $\phi$ and glide present in Aut($X$) then we count it once in the collection of distinct $n$-sheeted maps.
If any one of $\phi$ or glide present in Aut($X$) we count it twice.
If none of them present then whenever for a map $Y$ after applying $\phi$ and glide we get equal maps we count it twice and otherwise we count it $4$ times.
Let $A(n)$ denotes number of $n$-sheeted maps up to isomorphism on which after applying $\phi$ and glide we get equal maps, and $B(n)$ denotes the number of maps up to isomorphism for which we get different maps. In both of these counting of $A(n)$ and $B(n)$ we considering those maps whose automorphism group does not contains $\phi$ and glide.
Then we have,
$$2A(n) + 4B(n) = \sigma(n) - f_2(n) - g(n) + h(n)~~~~~(*)$$
By Lemma \ref{alpha} we have $2A(n) = \alpha(n) \implies A(n) = \frac{\alpha(n)}{2}$.
Putting the value of $A(n)$ in $(*)$ we get,
$B(n) = \frac{1}{4} [\sigma(n) - f_2(n) - g(n) + h(n) - \alpha(n) ] $.
Thus number of $2$-uniform maps up to isomorphism is given by,
$$A(n) + B(n) = \frac{1}{4}[\sigma(n) - g(n) -f_2(n) + h(n) + \alpha(n)] =\frac{1}{4}[\sigma(n) - g(n) -f_2(n) + f_3(n)] .$$
Hence, total number of $n$-sheeted covers will be (last equality comes by using $\alpha(n)+h(n)=f_3(n)$),
$$ A(n)+B(n)+h(n)+\frac{g(n)-h(n)}{2}+\frac{f_2(n)-h(n)}{2}= \frac{1}{4}[\sigma(n)+f_2(n)+g(n)+f_3(n)].$$

Finally, suppose $Z=[4^1,8^2]$. Stabilizer of the tiling is $D_4$. For a map $X$ possible orders of $\mathcal{S}_X$ are 2, 4, 8 and we are counting it 4, 2, 1 times respectively. 
Let $\mathcal{Q}_i(n)$ denotes number of $n$-sheeted covers up to isomorphism with order of stabilizer $i$ for $i=2, 4, 8$.  Then we have,
\begin{equation}\label{eqn2}
    4\mathcal{Q}_2(n)+2\mathcal{Q}_4(n)+\mathcal{Q}_8(n) = \sigma(n)
\end{equation}
To count number of distinct maps with this specific symmetries we make following claims.
Let $R_1'$ and $R_2'$ denotes the lines which bisects the angle between $OA_2$, $OB_2$ and $OB_2$, $O(-A_2)$ respectively. Let $r_i'$ be the map obtained by taking reflection of $E_2$ about $R_i'$ for $i=1,2$ respectively. Pictorially the lines are in Figure \ref{refl2}.
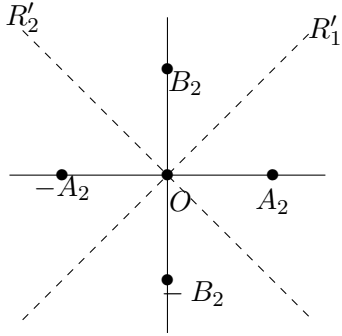
\begin{figure}[h]
    \centering
    \begin{tikzpicture}[scale=0.7]
	
		\node  (0) at (0, 0) {$\bullet$};
		\node  (1) at (-3, 0) {};
		\node  (2) at (3, 0) {};
		\node  (3) at (0, 3) {};
		\node  (4) at (0, -3) {};
		\node  (5) at (2.75, 2.75) {};
		\node  (6) at (-2.75, -2.75) {};
		\node  (7) at (-2.75, 2.75) {};
		\node  (8) at (2.75, -2.75) {};
		\node  (9) at (2, 0) {};
		\node  (10) at (0, 2) {};
		\node  (11) at (-2, 0) {};
		\node  (12) at (0, -2) {};
		\node  (13) at (2, 0) {$\bullet$};
		\node  (14) at (0, 2) {$\bullet$};
		\node  (15) at (-2, 0) {$\bullet$};
		\node  (16) at (0, -2) {$\bullet$};
		\node  (17) at (2, -0.5) {$A_2$};
		\node  (18) at (0.25, 1.75) {$~B_2$};
		\node  (19) at (-2, -0.25) {$-A_2$};
		\node  (20) at (0.25, -2.25) {$~~-B_2$};
		\node  (21) at (3, 2.75) {$R_1'$};
		\node  (22) at (-2.75, 3) {$R_2'$};
		\node  (23) at (0.25, -0.5) {$O$};

		\draw [style=dashed](6.center) to (5.center);
		\draw [style=dashed](7.center) to (8.center);
		\draw (1.center) to (2.center);
		\draw (3.center) to (4.center);
	
\end{tikzpicture}

    \caption{Line Reflections in $E_2$}
    \label{refl2}
\end{figure}
Let $\psi$ be the function obtained by taking $90^{\circ}$ rotation of $E_2$ about origin in anticlockwise direction.
\begin{claim}\label{cm-4}
If $X$ be a $n$-sheeted toroidal cover of a given map $X_0$, then the total number of maps having $r_i'$, for $i=1,2$, in the automorphism group of $X$ is equal to $f_3(n)$ as defined in Claim \ref{cm--3}.
\end{claim}
\noindent \emph{Proof of Claim \ref{cm-4} :}
Let $f_4(n)$ denotes the total number of maps having $r_1'$, in the automorphism group of $X$.
Observe that $r_1'$ sends $A_2$ to $B_2$ and $B_2$ to $A_2$. Proceeding in similar way as in Claim \ref{cm-1} we the polynomial for $r_1'$ will be $x^2-1$. 
$\rho_{0,-1}(1)=1$, $\rho_{0,-1}(2)=1$, $\rho_3(4)=2$ and $\rho_3(2^k)=4$ for all $k\in\mathbb{N}$ with $k\geq3$.
$\rho_{0,-1}(1)=1$ since $x=0$ is the solution for $x^2-1\equiv0\pmod1$.
$\rho_{0,-1}(2)=1$ since $1$ is the only solution of $0$ in $\mathbb{Z}_2$.
$\rho_3(4)=2$ since $1$ and $3$ are the solutions of $x^2-1=0$ in $\mathbb{Z}_4$.\\
Now, $x^2-1\equiv 0\pmod{2^k}\implies x^2\equiv1\pmod{2^k}$. We know that the congruence $x^2\equiv1\pmod{2^k}$ has exactly four incongruent solutions for $k\geq 3$. Hence, $\rho_{0,-1}(2^k)=4$. 

Proceeding in similar way as in the proof of Claims \ref{cm-1} and \ref{cm---2} we get
$f_4(2^k)=2k-1$ when $k\in\mathbb{N}$ and for odd prime $p$,
$f_4(p^k)=k+1$ where $k\in\mathbb{N}$.
and for $n=2^{k_0}p_1^{k_1}p_2^{k_2}\dots p_{n_1}^{k_{n_1}}$ where $p_i$ is any odd prime for $i\in \{0,1,\dots,n_1\}$ we get total number of maps having $r_1'\in$ Aut($X$) is,
\[ f_4(n)=\left\{
	\begin{array}{ll}
		\prod_{i=1}^{n_1}(k_i+1)  & \mbox{, if } k_0=0 \\
		(2k_0-1)\prod_{i=1}^{n_1}(k_i+1) & \mbox{, otherwise.}
	\end{array}
\right. \]
Observe that $f_4(n) = f_3(n)$. Similarly, for $r_2'$ we will get the same function. This completes the proof of Claim \ref{cm-4}.

\begin{claim}\label{lem-2}
If $X$ be a $n$-sheeted toroidal cover of a given map $X_0$, then the total number of maps having $\psi$ in the automorphism group of $X$ is equal to $f_2(n)$. Where $\psi$ denotes the map obtained by taking $90^{\circ}$ rotation of $E_2$ with respect to origin.
\end{claim}
\noindent\emph{Proof of Claim \ref{lem-2} :}
Since $\psi$ is $90^{\circ}$ rotation of $E_2$ so the proof is exactly same as of Claim \ref{cm---2}.

\begin{claim}\label{cm-5}
Let $X$ be an $n$-sheeted cover such that $\mathcal{S}_X=\mathcal{S}_{E_2}$. Then number of such possible $X$ is given by $$f_6(n):=\left\{
	\begin{array}{ll}
		0  & \mbox{, if } k_i\equiv1\pmod2 \mbox{ for some } i\in\{0,1,2,\dots,n_1\}\\
		1 & \mbox{, otherwise.}
	\end{array}
\right.$$ 
where 
$n=2^{m}\cdot 3^{k_0} \cdot \prod_{i=1}^{n_1}p_i^{k_i}$ such that $p_i$ is any prime other than $2$ for $i\in \{0,1,\dots,n_1\}$.
\end{claim}
\noindent\emph{Proof of Claim \ref{cm-5} :} 
Since, $\mathcal{S}_X=\mathcal{S}_{E_2}$ so Aut($X$) will contain $\psi$ and $r_1'$. Thus, the total number of such maps is given by the number of solutions of gcd$(x^2+1,\,1-x^2)$ in $\mathbb{Z}_{\frac{d^2}{n}}$ satisfying $n\mid d^2$ for every divisor $d$ of $n$.
Let $\rho_6(n):=\#\{x\in \mathbb{Z}_n:\, x^2+1=0\,{\rm and}\,1-x^2=0\}$.
Therefore
$$f_6(n):=\sum_{\substack{d \mid n \\n \mid d^2}}\rho_6\left(\frac{d^2}{n}\right).$$ 
By similar reason $\rho_6$ is multiplicative and as a consequence we have $f_6$ is also multiplicative. Thus it is enough to calculate for prime powers.
$$f_6(p^k) =\left\{
	\begin{array}{ll}
		\sum_{l= \frac{k}{2} }^k\rho_{6}\left(p^{2l-k}\right)  & \mbox{, if  k is even} \\
		\sum_{l= \frac{k+1}{2} }^k\rho_{6}\left(p^{2l-k}\right) & \mbox{, if  k is odd}
	\end{array}
          \right.$$
Now, $\rho_{6}(1)=1$ since $x=0$ is the solution for the congruences $x^2+1\equiv0\pmod1$ and $x^2-1\equiv0\pmod1$.
Since $x^2+1=x^2-1$ in $\mathbb{Z}_2$, gcd$\{(x^2+1),\,(x^2-1)\}=x^2+1$.
Since $x=1$ is the only solution of $x^2+1$ in $\mathbb{Z}_2$, $\rho_{6}(2)=1$.

Since gcd$\{(x^2+1),\,(x^2-1)\}=1$ in $\mathbb{Z}_{2^k}$ where $k\in \mathbb{N}\setminus\{1\}$, we have $\rho_6(2^k)=0$ for all $k$.\\
$$f_6(2^k) =\left\{
	\begin{array}{ll}
		\sum_{l= \frac{k}{2} }^k\rho_{6}\left(2^{2l-k}\right) = 1 & \mbox{, if  k is even} \\
		\sum_{l= \frac{k+1}{2} }^k\rho_{6}\left(2^{2l-k}\right) = 1  & \mbox{, if  k is odd}
	\end{array}
          \right. $$

Since gcd$\{(x^2+1),\,(x^2-1)\}=1$ in $\mathbb{Z}_{p^k}$, we have $\rho_3(p^k)=0$ for all $k\in \mathbb{N}$. Thus we have,
Let $n=p^k$. Then, $d=p^i$, $i\leq k$.
$$f_6(p^k) =\left\{
	\begin{array}{ll}
		\sum_{l= \frac{k}{2} }^k\rho_{6}\left(2^{2l-k}\right) = 1 & \mbox{, if  k is even} \\
		\sum_{l= \frac{k+1}{2} }^k\rho_{6}\left(2^{2l-k}\right) = 0  & \mbox{, if  k is odd}
	\end{array}
          \right. $$

Let $n=2^{m}3^{k_0}p_1^{k_1}p_2^{k_2}\dots p_{n_1}^{k_{n_1}}$ where $p_i$ is any prime other than $2$ for $i\in \{0,1,\dots,n_1\}$.\\
Then, $$f_6(n)=\left\{
	\begin{array}{ll}
		0  & \mbox{, if } k_i\equiv1\pmod2 \mbox{ for some } i\in\{0,1,2,\dots,n_1\}\\
		1 & \mbox{, otherwise.}
	\end{array}
\right.$$ 
This completes the proof of Claim \ref{cm-5}.

\smallskip

Now, clearly, $\mathcal{Q}_8(n) = f_6(n)$. The stabilizer of elements of $\mathcal{Q}_4(n)$ are either generated by $90^{\circ}$ rotation or point reflection with $r_1'$ or point reflection with $r_2'$. Hence, $\mathcal{Q}_4(n)=(f_4(n)-f_6(n))+(f_4(n)-f_6(n))+(f_2(n)-f_6(n))=f_2(n)+2f_4(n)-3f_6(n)$.
$\mathcal{Q}_2(n)$ can be determined uniquely from equation (\ref{eqn2}). Thus, total number of $n$-sheeted covers up to isomorphism is $\mathcal{Q}_2(n)+\mathcal{Q}_4(n)+\mathcal{Q}_8(n)= \frac{1}{4}[\sigma(n)+2f_2(n)+4f_4(n)-3f_6(n)]$.
\end{proof}

Finally, we prove Theorem \ref{thm-main4} and show existence of $k$-orbital minimal covers.

\begin{proof}[Proof of Theorem \ref{thm-main4}]
Let $X$ be a $m$-orbital map of vertex type $[p_1^{n_1}, \dots p_k^{n_k}]$ and $Y_1$ be a $k (\le m)$-orbital cover of $X$. Let number of sheets of the cover $Y_1\longrightarrow X$ be $n_1$. Consider the set $C_1$ containing all $s$ sheeted covering of $X$ for $s\le n_1-1$. Now, we check the existence of a $k$-orbital cover in $C_1$. If it does not exist, then $Y_1$ be a minimal $k$-orbital cover otherwise take $Y_2$ be a $k$-orbital cover in $C_1$. Let number of sheets for the covering $Y_2\longrightarrow X$ be $n_2$. Then consider $C_2$ be the collection of all $s$ sheeted cover of $X$ for $s\le n_2$. Again check existence of a $k$-orbital cover in $C_2$. If not then $Y_2$ be a minimal $k$-orbital cover of $X$ otherwise proceed similarly to more lower sheeted covering. As there are only finitely many covers for each sheet so the process will terminate after finite number of steps. This completes the proof of Theorem \ref{thm-main4}.
\end{proof}

\section{Acknowledgements}

Authors are supported by NBHM, DAE (No. 02011/9/2021-NBHM(R.P.)/R$\&$D-II/9101).


{\small

}

\end{document}